\definecolor{aleacolor}{rgb}{0.16,0.59,0.78}
\renewcommand{\cite}{\citet}
\theoremstyle{plain}
\newtheorem{theorem}{Theorem}[section]                                          
\newtheorem{lemma}[theorem]{Lemma}
\newtheorem{corollary}[theorem]{Corollary}
\theoremstyle{definition}
\theoremstyle{remark}
\newtheorem{remark}[theorem]{Remark}
\makeatletter \@addtoreset{equation}{section} \makeatother
\newcommand{\R}{\mathbb R}
\newcommand{\Z}{\mathbb Z}
\newcommand{\N}{\mathbb N}
\begin{document}

\title{On the survival probability of a random walk in random environment with killing}

\author{Stefan Junk}

\address{Technische Universit\"at M\"unchen\newline
Department of Mathematics\newline
M14 - Chair for Probability Theory\newline
Boltzmannstr. 3,\newline
85747 Garching bei M\"unchen, Germany}

\email{junk@tum.de}
\urladdr{\url{https://www-m14.ma.tum.de/personen/junk/}}

\subjclass[2000]{60K37} 
\keywords{Random walk in random environment, survival}

\begin{abstract}We consider one dimensional random walks in random environment where every time the process stays at a location, it dies with a fixed probability. Under some mild assumptions it is easy to show that the survival probability goes to zero as time tends to infinity. In  this paper we derive formulas for the rate with which this probability decays. It turns out that there are three distinct regimes, depending on the law of the environment.
\end{abstract}

\maketitle

\section{Introduction}
We use the following notations to describe random walks in random environments (RWREs) on $\mathbb{Z}$ in i.i.d. environments: Let $P$ be a probability measure on
\[
\Omega:=\{(\omega_x)_{x\in\Z}=(\omega^+_x,\omega^0_x,\omega^-_x)_{x\in\Z} \ | \forall x:\ \omega_x^+,\omega_x^0,\omega_x^-\geq 0,\omega_x^++\omega_x^0+\omega_x^-=1\}
\]
We interpret an element $\omega\in\Omega$ as the transition probabilities for a random walk in $\Z$: Let $(X_n)_{n\in\mathbb{N}}$ be Markov chain on $\mathbb{Z}$ with transition kernel 
\[
\begin{array}{cc}
P_\omega(X_{n+1}=z+1|X_n=z)& :=\omega_z^+ \\ P_\omega(X_{n+1}=z-1|X_n=z)& :=\omega_z^- \\ P_\omega(X_{n+1}=z|X_n=z)& :=\omega_z^0
\end{array}
\]
We write $P_\omega^z:=P_\omega(.|X_0=z)$, and will omit the $z$ if the random walk starts in zero. We are interested in the joint probability measure 
\[\mathbb{P}^z:=P\times P^z_\omega\] 
of environment and random walk. 
An important assumption will be that $P$ is a product measure, meaning that 
\[\{\omega_x:x\in\mathbb{Z}\}\text{ is i.i.d. under }P\]
It is reasonable to demand that, $P$-almost surely, $\omega_0^+>0$ and $\omega_0^->0$, so that (with probability one) the process can move infinitely far in both directions. We make the even stronger assumption, that ($P$-almost surely) there is a uniform bound for the minimal probability of going to the left or right. This condition is usually called "uniform ellipticity" (\ref{eq:defue}):
\begin{equation}\label{eq:defue}\tag{UE}
\exists\varepsilon_0>0\quad\text{ s.t. }P(\omega_0^+\geq\varepsilon_0)=P(\omega_0^-\geq\varepsilon_0)=1
\end{equation}
An important quantity is denoted
\begin{equation}\label{eq:rho}
\rho_i:=\frac{\omega_i^-}{\omega_i^+} \text{ for } i \in\mathbb{Z}
\end{equation}
An overview over results for this kind of random walk in random environment can be found for example in \cite{rwre}.\par We now introduce the model we are interested in. It is originally motivated by two papers on statistical mechanics\footnote{\cite{phy1} and \cite{phy2}} where a higher-dimensional version was used to describe polymers folding in a solution having random variations in the local density. From this physical motivation it is desirable that the RWRE stays in parts of the environment where the probability on staying at the same location (which can be interpreted as a local density of a solution) is small. We model this by choosing some $r\in(0,1)$, and whenever the process stays in place, it dies with probability $r$. 
\begin{remark}
The results in this paper are for measures $P$ on the environment where the survival probability is dominated by events depending only on $P$, such that the parameter $r$ does not appear in the result. See also the remark in section 3.5.
\end{remark}
Formally, consider a probability space as above where we additionally have a sequence $(\xi_n)_n$ of i.i.d. Bernoulli random variables, independent of environment and random walk, with success probability $r$. Then we can define the extinction time $\tau$ by
\begin{equation}
\tau:=\inf\{n\geq 1:X_n=X_{n-1},\xi_n=1\}
\end{equation}
If we assume that there is a positive probability for extinction, that is
\[P(\omega_0^0>0)>0\]
then it is easy to show $\lim_{n\to\infty}\mathbb{P}(\tau>n)= 0$. We are interested in the asymptotic behavior of $\mathbb{P}(\tau>n)$. 
\section{Preliminaries}
One can easily give a lower bound for the survival probability by considering a set of environments in which the survival probability is large, and such that the probability for such a favorable environment is not too small. In this section we introduce the notion of a valley, which will play the role of such an environment. \par For some fixed $\omega\in\Omega$ we define the potential function as follows:
\begin{equation}\label{eq:defV}
V:\R\to\R,\quad x\mapsto\left\{\begin{matrix}
\sum_{i=0}^{\lfloor x\rfloor}\ln\rho_i & \text{ if }{\lfloor x\rfloor}>0\\
0&\text{ if }{\lfloor x\rfloor}=0\\
-\sum_{i={\lfloor x\rfloor}}^{-1}\ln\rho_i&\text{ if }{\lfloor x\rfloor}<0
\end{matrix}\right.
\end{equation}
%Einführung schreiben
\\For some interval $[a,c]$ and an environment $\omega$ we define the following quantities:
\[H_+(a,c):=\max_{b\in[a,c]}\left(\max_{x\in[b,c]} V(x)-\min_{x\in[a,b]}V(x)\right)\]
\[H_-(a,c):=\max_{b\in[a,c]}\left(\max_{x\in[a,b]} V(x)-\min_{x\in[b,c]}V(x)\right) \]
\[H(a,c):=\min\left\{H_+(a,c),H_-(a,c)\right\}\]
See also figure~\ref{figure} for an illustration. When no confusion occurs we simply write $H_+,H_-$ and $H$. 
\begin{figure}[h]
\centering
\pgfmathsetseed{3}
\newcommand{\Emmett}[4]{% points, advance, rand factor, options, end label
\draw[#4] (0,0)
\foreach \x in {1,...,#1}
{   -- ++(#2*2,rand*#3*2)
};
}
\begin{tikzpicture}[scale=1.5]%,show background rectangle]
\Emmett{80}{0.02}{0.2}{black}

\node[inner sep=0.7pt, outer xsep=0pt] (n1) at (0.1,0.44){};
\node[inner sep=0.7pt, outer xsep=0pt] (n2) at (-0.5,0.44){};
\node[inner sep=0.7pt, outer xsep=0pt] (n3) at (-0.5,-2.22){};
\node[inner sep=0.7pt, outer xsep=0pt] (n4) at (3,-2.22){};
\node[inner sep=0.7pt, outer xsep=0pt] (n5) at (2.92,0.86){};
\node[inner sep=0.7pt, outer xsep=0pt] (n6) at (3.5,0.86){};
\node[inner sep=0.7pt, outer xsep=0pt] (n7) at (3.5,-2.22){};
\node[inner sep=0.7pt, outer xsep=0pt] (n8) at (0,-2.22){};
\path[draw] (n1) -- (n2);
\path[draw] (n3)-- (n7);
\path[draw] (n5)-- (n6);
%\path[draw] (n7)-- (n8);
\path[draw] [thick,decorate,decoration={brace,amplitude=7pt,mirror},xshift=0pt,yshift=-40pt]
(n2) -- (n3)node[midway, color=black,xshift=-17pt] {$H_-$};
\path[draw] [thick,decorate,decoration={brace,amplitude=7pt},xshift=0pt,yshift=-40pt]
(n6) -- (n7)node[midway, color=black,xshift=17pt] {$H_+$};

\draw[->,thick] (-1,-2.4) -- (4.5,-2.4) node[above] {$x$};
\draw[->,thick] (0.4,-2.6) -- (0.4,1) node[right] {$V(x)$};
\draw (1.56,-2.3)--(1.56,-2.5) node[below] {$b$};
\draw (3.2,-2.3)--(3.2,-2.5) node[below] {$c$};
\draw (0,-2.3)--(0,-2.5) node[below] {$a$};
\end{tikzpicture}
\caption{We denote by $H_-$ the maximal difference $V(x)-V(y)$ in the potential between any two points $x<y$ in $[a,b]$. The same holds for $H_+$ with $x<y$ replaced by $x>y$. Starting from the point of minimal potential, the random walk has to overcome a potential difference of at least $H_-\land H_+=H$ to leave the valley.}
\label{figure}
\end{figure}
\par We denote the first hitting time of the boundary by \[U=U_{a,c}:=\inf\{n\geq 0:X_n=a\vee X_n=c\}\]
Then we have the following two lemmas, which give upper and lower bounds on the probability of leaving a valley.
\begin{lemma}\label{lemmapot1}
Let $\omega\in\Omega$ be an environment such that
\[\forall x\in\Z:\quad \omega_x^0=0, \omega_x^+\geq \varepsilon_0,\omega_x^-\geq \varepsilon_0\]
where $\varepsilon_0$ is the constant from (\ref{eq:defue}). Then there exist constants $\gamma_1>0$ and $\gamma_2=\gamma_2(\varepsilon_0)>0$ such that for $c-a\geq \gamma_2(\varepsilon_0)$ and for all $n\geq 1$ we have:
\begin{equation}\label{eq:conf1}
\max_{x\in(a,c)}P_\omega^x\left(\frac{U_{a,c}}{\gamma_1(c-a)^4e^{H(a,c)}}>n\right)\leq e^{-n}
\end{equation}
\end{lemma}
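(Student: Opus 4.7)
The plan is to reduce the exponential tail bound to a first-moment estimate on $U_{a,c}$ and then bootstrap via the strong Markov property. Under the hypothesis $\omega_x^0=0$ the walk restricted to $[a,c]$ is a birth-death chain. I would introduce two auxiliary chains on $[a,c]$: one with $a$ absorbing and $c$ reflecting (whose hitting time of $a$ I denote $T_a^{(c)}$), and its mirror with $a$ reflecting and $c$ absorbing ($T_c^{(a)}$). Coupling each auxiliary chain with the original walk via shared transitions gives $U_{a,c}\leq T_a^{(c)}$ and $U_{a,c}\leq T_c^{(a)}$ almost surely on the respective couplings, so that
\[
E_\omega^x[U_{a,c}]\leq\min\bigl(E_\omega^x[T_a^{(c)}],\,E_\omega^x[T_c^{(a)}]\bigr).
\]

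For the reflecting-at-$c$ chain, the reversible measure is $\pi(y)\propto e^{-V(y)}/\omega_y^+$, and the standard birth-death formula $E_\omega^{y+1}[T_y]=\pi([y+1,c])/(\pi(y)\omega_y^+)$, together with the telescoping $E_\omega^x[T_a^{(c)}]=\sum_{y=a}^{x-1}E_\omega^{y+1}[T_y]$, yields
\[
E_\omega^x[T_a^{(c)}]\;\leq\;\frac{1}{\varepsilon_0}\sum_{a\leq y<z\leq c}e^{V(y)-V(z)}\;\leq\;\frac{(c-a)^2}{\varepsilon_0}\,e^{H_-(a,c)},
\]
after invoking uniform ellipticity and the elementary identity $\max_{a\leq y<z\leq c}(V(y)-V(z))=H_-(a,c)$, which follows from the definition of $H_-$ by taking $b=y$ (and by choosing $y,z$ realising the maximum/minimum for the reverse inequality). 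Symmetrically $E_\omega^x[T_c^{(a)}]\leq(c-a)^2 e^{H_+(a,c)}/\varepsilon_0$, and taking the minimum gives $M:=\sup_{x\in(a,c)}E_\omega^x[U_{a,c}]\leq(c-a)^2 e^{H(a,c)}/\varepsilon_0$.

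Markov's inequality then yields $\sup_x P_\omega^x(U_{a,c}>2M)\leq 1/2$, and the strong Markov property applied at times $2M,4M,\ldots$ upgrades this to $\sup_x P_\omega^x(U_{a,c}>2kM)\leq 2^{-k}$ for every integer $k\geq 1$. To conclude, one chooses $\gamma_1$ a universal constant and $\gamma_2(\varepsilon_0)$ large enough that $n\gamma_1(c-a)^4 e^{H(a,c)}\geq 2\lceil n/\ln 2\rceil M$ for every $n\geq 1$ and every $c-a\geq\gamma_2$; the spare factor $(c-a)^2$ in the prefactor $(c-a)^4$ (the moment bound only uses $(c-a)^2$) is exactly what absorbs the $1/\varepsilon_0$ into $\gamma_2(\varepsilon_0)$. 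The main obstacle is the first-moment bound of the previous paragraph: obtaining the clean exponent $H=\min(H_+,H_-)$ rather than the full oscillation $\max V-\min V$ requires precisely the two-chain comparison described, since a direct Green's-function estimate on $[a,c]$ with both endpoints absorbing would naturally produce the full oscillation instead.
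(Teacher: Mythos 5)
Your argument is essentially correct, but note that the paper itself contains no proof of this lemma: it quotes the statement from the reference \cite{soft} and points to \cite{soft2} for the proof, so there is nothing in-text to compare against line by line. Your route -- (i) couple the walk with the two one-sided chains ($a$ absorbing/$c$ reflecting and vice versa) to get $U_{a,c}\le T_a^{(c)}\wedge T_c^{(a)}$ pathwise, (ii) bound the expected one-sided exit times by the birth--death formula with the reversible measure $\pi(y)\propto e^{-V(y)}/\omega_y^+$, giving $E_\omega^x[T_a^{(c)}]\le \varepsilon_0^{-1}\sum_{y<z}e^{V(y)-V(z)}\le \varepsilon_0^{-1}(c-a)^2e^{H_-}$ and symmetrically with $H_+$, so $\sup_xE_\omega^x[U_{a,c}]\le \varepsilon_0^{-1}(c-a)^2e^{H}$, and (iii) upgrade to an exponential tail by Markov's inequality plus iteration of the Markov property over blocks of length $\lceil 2M\rceil$ -- is a standard and sound way to prove exactly this statement, and the bookkeeping at the end (using the spare factor $(c-a)^2$ in $(c-a)^4$ and the threshold $\gamma_2(\varepsilon_0)$ to absorb $\varepsilon_0^{-1}$ and the rounding from $\lceil n/\ln 2\rceil$) is consistent with the way the constants appear in the statement. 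The checks I would ask you to make explicit are routine: the identity $\max_{a\le y\le z\le c}(V(y)-V(z))=H_-(a,c)$ (immediate from the definition by choosing $b$ between the maximizing $y$ and minimizing $z$); the adjustment of $\pi$ at the reflecting endpoint and the paper's slightly shifted indexing convention for $V$ (both only change the bound by a factor bounded in terms of $\varepsilon_0$, absorbable into $\gamma_1,\gamma_2$); and the integer rounding in the block iteration. Your closing remark that a two-sided absorbing Green's function computation would ``naturally'' give the full oscillation rather than $H$ is a matter of taste rather than mathematics and does not affect the proof; the essential point, that taking the minimum of the two one-sided expected exit times is what produces $e^{H}$ with $H=H_+\wedge H_-$, is correct.
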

\begin{lemma}\label{lemmapot2}Let $\omega$ be as in lemma \ref{lemmapot1}, and assume we find some $b\in(a,c)$ such that 
\begin{equation}\label{lemmapotvor1}V(c)=\max_{x\in[b,c]}V(x), \ \ V(a)=\max_{x\in[a,b]}V(x)\end{equation}
Then there are $\gamma_3,\gamma_4>0$ such that for $c-a\geq \gamma_4$ and for all $n\geq 1$ we have
\begin{equation}
\min_{x\in(a,c)}P_\omega^x\left(\gamma_3\ln(2(c-a))\frac{U_{a,c}}{e^{H(a,c)}}>n\right)\geq\frac{1}{2(c-a)}e^{-n}
\end{equation}
\end{lemma}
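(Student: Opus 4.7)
The plan is to decompose the survival probability via the strong Markov property at the first hitting time of $b$: the walk reaches $b$ before exiting $[a,c]$ with probability at least $1/(c-a)$, and starting from $b$ it survives for time $T := n e^{H(a,c)} / (\gamma_3 \ln(2(c-a)))$ with probability at least $2 e^{-n}$. Multiplying these two estimates yields the claimed bound $e^{-n}/(2(c-a))$, the logarithmic denominator in $T$ providing the slack needed to absorb the multiplicative constants that appear.

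For the first step, I would use the classical hitting-probability formula for the birth--death chain. For $x \in (a,b)$,
\[
P_\omega^x(T_b < T_a) \;=\; \frac{\sum_{i=a}^{x-1} e^{V(i) - V(a)}}{\sum_{i=a}^{b-1} e^{V(i) - V(a)}}.
\]
The hypothesis $V(a) = \max_{[a,b]} V$ forces every summand to be at most $1$, so the denominator is bounded by $b-a$ while the numerator contains the term $i=a$ equal to $1$; hence $P_\omega^x(T_b < T_a) \geq 1/(c-a)$. Because $\omega^0 \equiv 0$ the chain is strictly nearest-neighbour, so for $x < b$ one has $T_b \leq T_c$, and therefore $\{T_b < T_a\} = \{T_b < U_{a,c}\}$. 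The case $x \in (b,c)$ is handled by the symmetric argument using $V(c) = \max_{[b,c]} V$.

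For the second step, let $p := P_\omega^b(U_{a,c} < T_b^+)$, where $T_b^+$ is the first return time to $b$. A first-step decomposition followed by the same hitting-probability formula applied to the sub-intervals $[a,b]$ and $[b,c]$, together with the valley hypotheses and uniform ellipticity (which bounds $|V(\cdot+1)-V(\cdot)|$ by a constant $K=K(\varepsilon_0)$), should yield a bound $p \leq C_1 e^{-H(a,c)}$ for some $C_1 = C_1(\varepsilon_0)$. Conditional on the environment the successive excursions from $b$ are i.i.d.\ by the strong Markov property, so the number $N$ of excursions before exit is geometrically distributed with parameter $p$, and $U_{a,c} \geq N$ since each excursion takes at least one step. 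This yields
\[
P_\omega^b(U_{a,c} > T) \;\geq\; (1-p)^T \;\geq\; \exp(-2pT) \;\geq\; 2 e^{-n},
\]
provided $\gamma_3$ is chosen large relative to $C_1$ so that $2pT \leq n - \ln 2$, and $\gamma_4$ is chosen so that $c-a \geq \gamma_4$ forces both $p \leq 1/2$ and $\gamma_3 \ln(2(c-a)) \geq 2C_1$. Combining via the strong Markov property at $T_b$,
\[
P_\omega^x(U_{a,c} > T) \;\geq\; P_\omega^x(T_b < U_{a,c}) \cdot P_\omega^b(U_{a,c} > T) \;\geq\; \frac{2 e^{-n}}{c-a} \;\geq\; \frac{e^{-n}}{2(c-a)}.
\]

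The hard part will be the upper bound $p \leq C_1 e^{-H}$ in the excursion step. Since $H = \min(H_+, H_-)$ is defined as a maximum over splitting points $b'$ rather than directly at the chosen $b$, one must exploit the valley conditions $V(a) = \max_{[a,b]} V$ and $V(c) = \max_{[b,c]} V$ to show that the denominators $\sum_{i=a}^{b-1} e^{V(i)-V(a)}$ and $\sum_{i=b}^{c-1} e^{V(i)-V(c)}$ are each bounded below by a positive constant, and then to translate this into the correct exponential control in terms of $H$ on the first-step probabilities $P_\omega^{b-1}(T_a < T_b)$ and $P_\omega^{b+1}(T_c < T_b)$. The logarithmic slack in $T$ is then what allows the multiplicative constants $C_1$, $e^K$ and $\ln 2$ accumulated along the way to be absorbed without affecting the order of the bound.
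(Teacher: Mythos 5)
The paper itself does not prove Lemma \ref{lemmapot2} (it quotes it and refers to the cited sources for the proof), so I judge your argument on its own terms. The two-step architecture (reach a distinguished point before exiting, then bound the number of excursions from it, each escaping with probability at most of order $e^{-H}$) is the right kind of argument, and your excursion mechanics in step 2 ($U_{a,c}\geq N$, $N$ geometric, $(1-p)^T\geq e^{-2pT}$ for $p\leq 1/2$) are fine. The genuine gap is exactly the estimate you defer as ``the hard part'': $p:=P_\omega^b(U_{a,c}<T_b^+)\leq C_1e^{-H(a,c)}$ does not follow from (\ref{lemmapotvor1}) and is false in general, because (\ref{lemmapotvor1}) does not make $b$ a deep point of the valley. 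Concretely, let $V$ decrease from $V(a)$ to $V(b)=V(a)-1$ on $[a,b]$ with $b-a$ fixed, continue decreasing to a minimum $V(m)=V(a)-h$ at some $m\in(b,c)$, then increase to $V(c)=V(a)$; all increments can be kept in $[-K,K]$ with $K=\ln\frac{1-\varepsilon_0}{\varepsilon_0}$, and (\ref{lemmapotvor1}) holds for this $b$. Here $H(a,c)=h$, but a single excursion from $b$ escapes through $a$ with probability at least $\varepsilon_0e^{-1}/(b-a)$ (by the same hitting formula you use), which does not decay as $h\to\infty$. So no constant $C_1$ can give $p\leq C_1e^{-h}$; what the hypotheses actually yield is only $p\leq Ce^{-(\min\{V(a),V(c)\}-V(b))}$, and $\min\{V(a),V(c)\}-V(b)$ can stay of order one while $H\to\infty$. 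Your scheme therefore proves confinement only on the time scale $e^{\min\{V(a),V(c)\}-V(b)}$, not $e^{H(a,c)}$, and the route you sketch at the end (lower-bounding the denominators by a constant and ``translating into $H$'') cannot close this, since the inequality it is meant to produce is false.

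A correct proof must anchor the excursion count at a point $z$ from which \emph{both} one-sided barriers $\max_{[a,z]}V-V(z)$ and $\max_{[z,c]}V-V(z)$ are at least $H(a,c)$ up to an $\varepsilon_0$-dependent constant (in the example above this is $m$, not $b$), and must show that from an arbitrary start $x\in(a,c)$ the walk reaches such a point before leaving $[a,c]$ with probability of order $1/(c-a)$; the $b$ of (\ref{lemmapotvor1}) only fixes the gross geometry of the valley and is in general the wrong anchor, which is also why Corollary \ref{korrpot} takes $b$ to be the minimizer of the potential. Your step 1 aims at $b$ and so does not supply this. Two smaller points: even taking $b$ to be the minimizer would not rescue $p\leq C_1e^{-H}$ with $H$ defined through the split-point maxima, since $H$ can be produced by a deep sub-valley whose bottom is a different point; and the ``symmetric'' hitting bound on $(b,c)$ is not $1/(c-a)$ but carries an extra factor $\varepsilon_0/(1-\varepsilon_0)$ (the term equal to $1$ sits at the left end of both sums), which cannot be absorbed into the factor $2$ in $\frac{1}{2(c-a)}e^{-n}$ at $n=1$ when $\varepsilon_0$ is small, so the exact constant also needs more care than the proposal provides.
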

This version of lemmas \ref{lemmapot1} and \ref{lemmapot2} is taken from \cite{soft}\footnote{\;cf. \cite{soft}, p 23}, whereas the proofs can be found in \cite{soft2}\footnote{\;cf.  \cite{soft2}, pp 15}. Note that the point $b$ of minimal potential appears only in the assumption, but not in the result of lemma~\ref{lemmapot2}. But if we restrict ourselves to a random walk starting from $b$, we can get rid the assumption that the environment attains maximal potential at the edges: %Furthermore, for some interval $[a,c]$ satisfying only the conditions of Lemma \ref{lemmapot1}, let $b$ be any point of minimal potential. We can choose $a'$ in $a,b$ such that $a'$ has maximal potential in $[a,b]$, and $c'$ in $[b,c]$ such that $c'$ has maximal potential in $[b,c]$.
\begin{corollary}\label{korrpot}
Under the assumptions of lemma \ref{lemmapot1}, let $b$ be the point with minimal potential in $(a,c)$. Then for all $c-a\geq \gamma_4$ and all $n\geq 1$ we have
\begin{equation}\label{eq:conf2} P_\omega^b \left(\gamma_3\ln(2 (c-a))\frac{U_{a,c}}{e^{H(a,c)}}>n\right)\geq\frac{1}{2(c-a)}e^{-n}\end{equation}
Here we can use the same constants $\gamma_3,\gamma_4$ as in lemma \ref{lemmapot2}.
\end{corollary}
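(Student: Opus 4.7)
The plan is to reduce Corollary \ref{korrpot} to Lemma \ref{lemmapot2} by restricting attention to a well-chosen sub-interval $[a',c'] \subset [a,c]$ on which the max-at-edges hypothesis (\ref{lemmapotvor1}) is satisfied. Concretely, I would take $a' \in [a,b]$ to be a point at which $V$ attains its maximum on $[a,b]$, and $c' \in [b,c]$ a point at which $V$ attains its maximum on $[b,c]$. Then by construction $V(a') = \max_{x \in [a',b]} V(x)$ (since $[a',b] \subset [a,b]$ and $a' \in [a',b]$) and $V(c') = \max_{x \in [b,c']} V(x)$, so the hypotheses of Lemma \ref{lemmapot2} hold for the sub-interval $[a',c']$ with the same interior minimum point $b$.

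The crucial observation is that $H(a',c') = H(a,c)$: intuitively, the escape barrier from the valley floor $b$ is determined by the wall heights $V(a')-V(b)$ and $V(c')-V(b)$, which are unchanged whether $b$ is viewed as sitting inside $[a,c]$ or inside the smaller $[a',c']$. Granting this, Lemma \ref{lemmapot2} applied at the starting point $x = b$ yields
\[
P_\omega^b\Bigl(\gamma_3 \ln(2(c'-a'))\,\frac{U_{a',c'}}{e^{H(a',c')}} > n\Bigr) \geq \frac{1}{2(c'-a')}\,e^{-n}.
\]
Since $[a',c'] \subset [a,c]$ forces $U_{a',c'} \leq U_{a,c}$, and $c'-a' \leq c-a$ gives $\ln(2(c'-a')) \leq \ln(2(c-a))$, combining with $H(a',c') = H(a,c)$ shows that the event above is contained in $\{\gamma_3 \ln(2(c-a))\,U_{a,c}/e^{H(a,c)} > n\}$. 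Together with $\tfrac{1}{2(c'-a')} \geq \tfrac{1}{2(c-a)}$, this yields the desired lower bound.

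The main obstacle is justifying the identity $H(a',c') = H(a,c)$, which requires a careful comparison of the two max--min optimizations defining $H_\pm$ on the nested intervals; the crux is that the optimizers relevant on $[a,c]$ can always be realized by split points lying inside $[a',c']$, since on the trimmed pieces $[a,a']$ and $[c',c]$ the function $V$ is dominated by $V(a')$ and $V(c')$ respectively by the choice of $a',c'$. A secondary bookkeeping point is to verify $c'-a' \geq \gamma_4$ so that Lemma \ref{lemmapot2} may be invoked; this follows from $c-a \geq \gamma_4$ in the non-trivial case, and the degenerate situation (where $V$ is essentially constant on $[a,c]$, forcing $H(a,c) = 0$) can be handled directly with a simple estimate on $U_{a,c}$.
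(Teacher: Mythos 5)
Your proposal is essentially identical to the paper's proof: it also picks $\bar a\in[a,b]$ and $\bar c\in[b,c]$ maximizing $V$, notes $H(\bar a,\bar c)=H(a,c)$ so that Lemma \ref{lemmapot2} applies from the minimum $b$, and then uses $U_{\bar a,\bar c}\le U_{a,c}$ and $\bar c-\bar a\le c-a$ to pass to the stated bound. Your extra bookkeeping point about verifying $\bar c-\bar a\ge\gamma_4$ is a legitimate concern that the paper's one-line proof silently skips, so flagging it is fine, but otherwise the route is the same.
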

\begin{proof}[Proof of the corollary]
Choose $\bar{a}\in [a,b]$ and  $\bar{c}\in [b,c]$ such that $V(\bar{a})=\max_{x\in[a,b]} V(x)$ and $V(\bar{c})=\max_{x\in[b,c]} V(x)$. Then $[\bar{a},\bar{c}]$ satisfies (\ref{lemmapotvor1}), and we have $H(a,c)=H(\bar{a},\bar{c})$. Using lemma \ref{lemmapot2} we get
%\[\gamma_3\ln(2 (c-a)) U_{a,c}\geq \gamma_3\ln(2 (\bar{c}-\bar{a})) U_{\bar{a},\bar{c}}\]
%and therefore
\[P_\omega^b \left(\gamma_3\ln(2 (c-a))\frac{U_{a,c}}{e^{H(a,c)}}>n\right)\geq P_\omega^b \left(\gamma_3\ln(2 (\bar{c}-\bar{a})) \frac{U_{\bar{a},\bar{c}}}{e^{H(\bar{a},\bar{c})}}>n\right)\geq\]
\[\geq\frac{1}{2(\bar{c}-\bar{a})}e^{-n}\geq\frac{1}{2(c-a)}e^{-n}\]
%Here the second inequality uses 
\end{proof}
Let $b_1,b_2,h>0$, $x\in\Z$ and $k\in\N$. We define the event that there is a valley of depth $h\ln n$ at the interval $I_n:= [x-b_1\ln n,x+b_2\ln n]$ around the location $x$ by
\begin{equation}\label{eq:defS_n}
S_n^k(x,b_1,b_2,h):=
\left\{\forall i\in I_n:\omega_i^0\leq\frac{1}{k} \right\}\cap
\left\{\begin{matrix}
V(x)=\min_{y\in I_n}V(y)\\[1mm]
V(x)-V(x-b_1\ln n)\geq h\ln n\\[1mm]
V(x-b_2\ln n)-V(x)\geq h\ln n\\
\end{matrix}\right\}
\end{equation}
Note that since we extended the definition of $V$ to $\R$, this depends only on locations in 
\[\big[x+\lfloor -b_1\ln n\rfloor,x+\lfloor b_2\ln n\rfloor\big]\cap \Z\]
%We will sometimes refer to locations in an interval $[a,b]\subset \R$ - in this case the truncation was omitted, and we mean locations in the interval $\big[\lfloor a\rfloor,\lfloor b\rfloor\big]$. 
In the definition of $S_n^k$, the left event ensures that while the random walk stays inside $I_n$, the probability of dying is not too large. The right event implies for $\omega\in S_n^k(x,b_1,b_2,h)$ that 
\[H(x-b_1\ln,x+b_2\ln)\geq h\ln x\]
so that we can use corollary \ref{korrpot} to bound the probability of leaving $I_n$. 
\par For $k=\infty$, we denote
\[S^\infty_n(x,b_1,b_2,h):=\bigcap_{k\in\N} S^k_n(x,b_1,b_2,h)\]
Equivalently, $S^\infty(x,b_1,b_2,h)$ can be defined as in (\ref{eq:defS_n}), with the left event replaced by $\left\{\forall i\in I_n:\omega_i^0=0 \right\}$. 
\section{Results}
\subsection{The polynomial case}
We first cover the case where we can create a valley such that $\omega_\cdot^0$ is zero on the inside of the valley. Consequently, the random walk survives as soon as we can ensure that it does not leave the valley.
\begin{lemma}\label{lemma1}Assume 
%\begin{equation}\label{eq:lemma1vor1}
%p:=\lim_{n\to\infty}\min\{p_n^+,p_n^-\}>0
%\end{equation}
\begin{equation}\label{eq:lemma1vor1}
p:=\min\Big\{P(\ln\rho_0>0,\omega_0^0=0),P(\ln\rho_0<0,\omega_0^0=0)\Big\}>0
\end{equation}
Then for all $b_1,b_2,h>0$ and $k\in\N\cup\{\infty\}$, we have 
\[\lim_{n\to\infty}\frac{\ln P(S_n^k(x,b_1,b_2,h))}{-\ln n} = C_k(b_1,b_2,h)\]
where
\[C_k(b_1,b_2,h)=-(b_1+b_2)\ln P\left(\omega_0^0\leq\frac{1}{k}\right)+\sup_{t>0}\left\{ht-b_1\ln E\left(\rho_0^t\left|\omega_0^0\leq\frac{1}{k}\right.\right)\right\}+\]
\begin{equation}\label{eq:lemma1res1}
+\sup_{t>0}\left\{ht-b_2\ln E\left(\rho_0^{-t}\left|\omega_0^0\leq\frac{1}{k}\right.\right)\right\}
\end{equation}
Furthermore we set
\[D_k(h):=\inf\Big\{C_k(b_1,b_2,h):b_1,b_2>0\Big\}\]
Then
\[\lim_{k\to\infty} D_k(h)=D_\infty(h)\]
\end{lemma}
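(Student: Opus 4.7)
The plan is to exploit the product structure of $P$ so that the event $S_n^k$ splits into an independent ``no-killing'' part and a large-deviation event for the potential $V$. By translation invariance of $P$ I first reduce to $x=0$. Write $S_n^k=A_n^k\cap B_n$, where $A_n^k:=\{\omega_i^0\leq 1/k : i\in I_n\}$ and $B_n$ encodes the valley condition on $V$. Since the $\omega_i$ are i.i.d., $P(A_n^k)=P(\omega_0^0\leq 1/k)^{|I_n|}$, producing the linear term $-(b_1+b_2)\ln P(\omega_0^0\leq 1/k)$ in the limit. Conditional on $A_n^k$, the $\ln\rho_i$ are i.i.d.\ with the conditional distribution given $\omega_0^0\leq 1/k$, and $B_n$ factors into two independent events $B_n^\pm$ on the left and right of $0$.

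For each side I apply Cram\'er's theorem to a sum of $\lfloor b_i\ln n\rfloor$ log-increments. Setting $\Lambda_k^+(t):=\ln E(\rho_0^t\mid\omega_0^0\leq 1/k)$ and analogously $\Lambda_k^-(t):=\ln E(\rho_0^{-t}\mid\omega_0^0\leq 1/k)$, the Chernoff inequality gives the upper estimate $P(B_n^+\mid A_n^k)\leq\exp\bigl(-\ln n\cdot\sup_{t>0}\{th-b_2\Lambda_k^+(t)\}\bigr)$ and similarly for $B_n^-$. For the matching lower bound I exponentially tilt by the optimizing $t^*>0$: under the tilted measure the potential walk has positive mean $h/b_i$, so the additional requirement that $V(0)$ is the minimum on $I_n$ (i.e.\ that the walk stays non-negative throughout an interval of length $b_i\ln n$) is retained with probability bounded below by a positive constant, via the standard Sparre-Andersen ladder result for random walks with positive drift. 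Uniform ellipticity (\ref{eq:defue}) makes $\ln\rho_0$ bounded, so all moment generating functions are entire; assumption (\ref{eq:lemma1vor1}) ensures the conditional law of $\ln\rho_0$ charges both signs, so $\Lambda_k^\pm$ is strictly convex and the optimal tilt $t^*$ is finite and positive. Combining both sides with the killing factor gives the claimed limit for each $k\in\N\cup\{\infty\}$.

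For the convergence $D_k\to D_\infty$, the monotonicity $S_n^k\supseteq S_n^{k+1}\supseteq S_n^\infty$ yields pointwise $C_k\leq C_{k+1}\leq C_\infty$, so $D_k$ is non-decreasing with $\lim_k D_k\leq D_\infty$. For the matching liminf I argue that, once $k$ is large enough that $P(\omega_0^0\leq 1/k)<1$, near-minimizers $(b_1^k,b_2^k)$ of $C_k$ lie in a common compact subset of $(0,\infty)^2$: the Legendre suprema diverge as $b_i\downarrow 0$ (where $\sup_{t>0}\{th\}=+\infty$), while the linear killing term diverges as $b_i\to\infty$. Extracting a convergent subsequence $(b_1^{k_j},b_2^{k_j})\to(b_1^*,b_2^*)$, dominated convergence (using the UE bound $|\ln\rho_0|\leq\ln((1-\varepsilon_0)/\varepsilon_0)$) gives $\Lambda_k^\pm\to\Lambda_\infty^\pm$ uniformly on compact $t$-intervals, and the suprema are attained on a bounded $t$-set independently of $k$. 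Hence $C_{k_j}(b_1^{k_j},b_2^{k_j})\to C_\infty(b_1^*,b_2^*)\geq D_\infty$, which completes the proof.

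I expect the main technical hurdle to be the lower Cram\'er estimate with the ``minimum at $0$'' constraint: one must check that under the optimally tilted, positive-drift law the walk stays non-negative for its full length with probability bounded away from zero, uniformly in $k$ and the relevant parameters. This is the classical ladder-height statement, but needs a careful setup. The convergence $D_k\to D_\infty$ is then essentially a soft compactness/continuity argument that should go through cleanly once the first part is established.
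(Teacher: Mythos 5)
Your skeleton matches the paper's: reduce to $x=0$, split $S_n^k$ into the i.i.d.\ ``safety'' factor (giving the linear term $-(b_1+b_2)\ln P(\omega_0^0\le\frac1k)$) and two one-sided Cram\'er events for the conditioned law of $\ln\rho_0$, and observe that uniform ellipticity makes the conditional moment generating functions finite so that the Legendre suprema appear. Where you differ is in the two subsidiary steps. For the ``minimum at $0$'' constraint you propose exponential tilting at the optimizer plus the Sparre--Andersen/ladder positivity for a positive-drift walk; this works (with the usual care about simultaneously demanding the total sum exceed $h\ln n$ under the critically tilted law), but the paper does something much lighter: by exchangeability of the i.i.d.\ coordinates, the event $\{A_n^+\cap B_n^+,\ V(j)=\min\}$ has probability at most that of $\{A_n^+\cap B_n^+,\ V(0)=\min\}$ for every $j$, so $P(A_n^+\cap B_n^+\cap C_n^+)\ge P(A_n^+\cap B_n^+)/(b_2\ln n)$, and the constraint costs only a polynomial factor -- no tilting or ladder theory needed. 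For $D_k\to D_\infty$ your monotonicity direction ($S_n^{k+1}\subseteq S_n^k$ gives $C_k\le C_{k+1}\le C_\infty$, hence $\lim_k D_k\le D_\infty$) is fine, but the reverse direction as you sketch it has a soft spot: you assert that the suprema in $t$ are attained on a bounded set uniformly in $k$ and in $b$ ranging over the compact set of near-minimizers, which is delicate near the lower edge of admissible $b$ (where the optimizing $t$ can escape to infinity), and you also need uniform convergence of the suprema, not just of $\Lambda_k^\pm$ on compacts. The paper avoids this entirely by solving the inf-sup explicitly: the saddle point gives $D_k(h)=h(t_k^++t_k^-)$, where $t_k^\pm$ is the unique positive solution of $E(\rho_0^{\pm t}\,|\,\omega_0^0\le\frac1k)=P(\omega_0^0\le\frac1k)^{-1}$, and then $t_k^\pm\to t_\infty^\pm$ follows from a uniform bound on $t_k^\pm$ (using (\ref{eq:lemma1vor1})), a subsequence argument and dominated convergence. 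If you want to keep your softer route, either prove the uniform $t$-bound for near-minimizing $b$, or replace the compactness step by fixing $t$ slightly below $t_\infty^\pm$ and noting that $\ln E(\rho_0^{\pm t}\mathbbm{1}_{\{\omega_0^0\le 1/k\}})\to\ln E(\rho_0^{\pm t}\mathbbm{1}_{\{\omega_0^0=0\}})<0$, which makes the $b$-linear lower bound on $C_k$ nonnegative in slope and yields $D_k\ge h(t_\infty^++t_\infty^--2\epsilon)$ for large $k$ -- essentially reproducing the paper's computation.
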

The main result of this paper is that in this case, $\mathbb P(\tau>n)$ decays at a polynomial rate.
\begin{theorem}\label{thm1}
Assumption (\ref{eq:lemma1vor1}) implies
\[\lim_{n\to\infty}\frac{\ln \mathbb{P}(\tau > n)}{-\ln n} = D_\infty(1)\in (0,\infty)\]
\end{theorem}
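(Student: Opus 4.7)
The theorem is established by matching polynomial bounds on $\mathbb{P}(\tau>n)$.

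\textbf{Lower bound on $\mathbb{P}(\tau>n)$.} Fix $\epsilon>0$ and arbitrary $b_1,b_2>0$, and apply Lemma~\ref{lemma1} with $k=\infty$ and $h=1+\epsilon$:
\[P\bigl(S_n^\infty(0,b_1,b_2,h)\bigr)\geq n^{-C_\infty(b_1,b_2,h)-o(1)}.\]
On this event $\omega_i^0\equiv 0$ throughout $I_n$, so the walker cannot be killed while inside $I_n$, and the origin is the bottom of a valley of depth $H\geq(1+\epsilon)\ln n$. Corollary~\ref{korrpot} applied with $X_0=0$ then gives $P_\omega^0(U_{I_n}\geq n)\geq c/\ln n$ for large $n$, since $e^H/\ln((b_1+b_2)\ln n)\gg n$. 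Combining,
\[\mathbb{P}(\tau>n)\geq\frac{c}{\ln n}\,n^{-C_\infty(b_1,b_2,1+\epsilon)-o(1)}.\]
Taking the infimum over $(b_1,b_2)$ gives $\limsup_n(-\ln\mathbb{P}(\tau>n))/\ln n\leq D_\infty(1+\epsilon)$; sending $\epsilon\downarrow 0$ and invoking continuity of $h\mapsto D_\infty(h)$ at $1$ (inherited from the Legendre-type structure of $C_\infty$) yields the inequality $\leq D_\infty(1)$.

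\textbf{Upper bound on $\mathbb{P}(\tau>n)$.} For any small $\delta>0$ and large $k$, the plan is to show that survival up to time $n$ forces the environment to contain a valley of type $S_n^k(x,b_1,b_2,1-\delta)$ at some location $x$ within $O(\ln n)$ of the origin. The contrapositive mechanism has a \emph{local} and a \emph{spatial} part. Locally: inside any window $W\subset[-n,n]$ lacking such a valley, a positive fraction of sites carry $\omega_i^0\geq 1/k$; combining Lemma~\ref{lemmapot1} (fast exit from $W$) with a bound on the occupation measure shows that during any passage of the walker through $W$ the expected number of stays is a non-negligible fraction of the passage time, yielding $\Omega(n^{1-\delta-o(1)})$ accumulated stays over $[0,n]$ and a killing factor super-polynomially small in $n$. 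Spatially: the same mechanism applied to long-range excursions shows that the walker cannot reach distance $\gg\ln n$ from the origin without being killed, reducing the candidate locations for the valley to a window of size $O(\ln n)$. A union bound over these positions and Lemma~\ref{lemma1} then give
\[\mathbb{P}(\tau>n)\leq O(\ln n)\cdot n^{-C_k(b_1,b_2,1-\delta)+o(1)}+n^{-K}\]
for arbitrary $K$. Optimizing over $(b_1,b_2)$, sending $k\to\infty$ (using $D_k\to D_\infty$ from Lemma~\ref{lemma1}) and $\delta\downarrow 0$ produces the matching upper bound $D_\infty(1)$.

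\textbf{Main obstacle and positivity.} The technically hard step is the local mechanism: converting ``absence of an $S_n^k$-valley of the given shape in $W$'' into a sharp quantitative lower bound on accumulated stays, losing only $n^{o(1)}$ factors, and iterating this across the random sequence of windows the walker visits. The assertion $D_\infty(1)\in(0,\infty)$ follows from the formula in Lemma~\ref{lemma1}: finiteness by evaluating at any $(b_1,b_2)>0$ under \eqref{eq:lemma1vor1}, and positivity because $C_\infty(b_1,b_2,1)$ tends to $+\infty$ both as $b_1+b_2\to 0$ (the $\sup_t$ terms diverge for small $b_i$ by the ellipticity bounds on $\rho_0$) and as $b_1+b_2\to\infty$ (the first term diverges), so the infimum is attained at an interior point with strictly positive value.
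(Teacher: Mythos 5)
Your lower bound is essentially the paper's argument (build $S_n^\infty(0,\cdot,\cdot,h)$ via Lemma~\ref{lemma1}, confine the walk with Corollary~\ref{korrpot}, note that killing is impossible inside since $\omega^0_\cdot\equiv 0$), and it works; in fact $h=1$ already suffices because Corollary~\ref{korrpot} only loses a factor $\exp(-c\ln\ln n)$, so the detour through $h=1+\epsilon$ and continuity of $D_\infty$ is unnecessary but harmless. The problem is the upper bound, where your ``local mechanism'' rests on a false implication and where you explicitly defer the step that carries the whole proof. Absence of an $S_n^k(x,b_1,b_2,1-\delta)$-valley in a window $W$ does \emph{not} imply that a positive fraction of sites in $W$ are dangerous: the valley event is the intersection of ``all sites safe'' and ``deep potential well'', so it can fail simply because the potential is shallow while every site is safe, in which case the walk is never killed in $W$ at all. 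The paper does not derive the density of dangerous sites from the no-valley condition; it imposes it as a separate high-probability requirement (\ref{eq:goodenv2}), controlled by a binomial large-deviation bound, and uses the no-valley condition (\ref{eq:goodenv1}) only for the following purpose: between two \emph{consecutive} dangerous sites $a,b$ all intermediate sites are safe by construction, so no valley of depth $(1-\delta)\ln n$ forces $H(a+1,b-1)\le(1-\delta)\ln n$, and Lemma~\ref{lemmapot1} then gives exit from $(a,b)$ within $n^{1-\delta/2}$ steps with probability at least a constant. Note also that Lemma~\ref{lemmapot1} only applies to environments with $\omega^0_\cdot=0$, so one must pass to the time-changed environment $\overline\omega$ of (\ref{eq:baromega}) and control the number of holding steps by a binomial bound before transferring the exit estimate back to the true walk --- a step your plan never mentions.

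Second, your conversion of this into a killing estimate is exactly the part you label the ``main obstacle'' and leave open, and your target ($\Omega(n^{1-\delta-o(1)})$ accumulated stays) is both unproven and stronger than needed: the paper only shows that in each of $\lfloor n^{\delta/2}\rfloor$ time blocks of length $n^{1-\delta/2}$ the walk hits a dangerous site with probability at least $1/4$, so that with probability $1-e^{-cn^{\delta/2}}$ there are at least $\tfrac18 n^{\delta/2}$ such hits, each costing a survival factor $1-r/k$; this super-polynomial bound already makes the good-environment contribution negligible against $n^{-D_k(1-\delta)+\beta}$. Finally, your spatial reduction to a window of size $O(\ln n)$ is quantitatively insufficient as stated: escaping a window of width $A\ln n$ forces only of order $A\ln n$ dangerous crossings, i.e.\ a survival factor $n^{-c(A)}$, which is itself polynomial and can dominate $n^{-D_k}$ unless $A$ is tuned large; the paper avoids this by working in the window $[-\ln^{1+\alpha}n,\ln^{1+\alpha}n]$, so that escape costs $e^{-c\ln^{1+\alpha}n}$, super-polynomially small. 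Your finiteness/positivity discussion of $D_\infty(1)$ is fine in spirit (the paper gets it more directly from $D_k(h)=h(t_k^++t_k^-)$ with $t_k^\pm>0$, using the standing assumption $P(\omega_0^0>0)>0$), but as it stands the upper bound has a genuine gap at its central step.
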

\subsection{Survival inside of a valley}
In this section we cover the case where (\ref{eq:lemma1vor1}) is violated, meaning there are no valleys with $\omega_\cdot^0=0$ on the inside. It may however happen that there are valleys consisting of locations that are not too dangerous in the sense that $\omega_\cdot^0$ decays with $n$. We denote such an event by
\begin{equation}\label{eq:defT_n}T_n(x,b_1,b_2,h):=S_n^n(x,b_1,b_2,h)\end{equation}
\[=\left\{\forall i\in I_n:\omega_i^0\leq\frac{1}{n} \right\}\cap
\left\{\begin{matrix}
V(x)=\min_{y\in I_n}V(y)\\[1mm]
V(x)-V(x-b_1\ln n)\geq h\ln n\\[1mm]
V(x-b_2\ln n)-V(x)\geq h\ln n\\
\end{matrix}\right\}\]
To decide when $T_n$ has positive probability we look at
\begin{align*}
p_n^+:=P\left(\ln\rho_0>0,\omega_0^0\leq\frac{1}{n}\right)\\
p_n^-:=P\left(\ln\rho_0<0,\omega_0^0\leq\frac{1}{n}\right)\\
p_n^0:=P\left(\ln\rho_0=0,\omega_0^0\leq\frac{1}{n}\right)
\end{align*}
By monotone convergence we see that (\ref{eq:lemma1vor1}) being violated implies 
\begin{equation}\label{eq:lemma2vor1}
\lim_{n\to\infty}\min\Big\{p_n^+,p_n^-\Big\}=0
\end{equation}
For this section assume 
\begin{equation}\label{eq:lemma2vor2}
\forall n: \ \min\Big\{p_{n}^+,p_{n}^-\Big\}>0
\end{equation}
Then $T_n$ has positive probability for all $n$. In order to compute $P(T_n)$ we need some regularity in the way $\ln\rho_\cdot$ behaves when conditioning on $\left\{\omega_\cdot^0\leq\frac{1}{n}\right\}$. We assume the following limits exist as weak limits of probability distributions:% - that is, the distributions of the positive or negative contributions to the valley conditional on \omega_\cdot^0\leq\frac{1}{n}$ converges weakly:
%\begin{equation}\label{eq:convdistr}P_n(\cdot):=P\left(\ln\rho_0\in\cdot\left|\omega_0^0\leq\frac{1}{n}\right.\right)
%\text{converges weakly}\end{equation}% to the distribution of some random variable on a probability space $(\bar{\Omega},\bar{\mathcal{F}})$. 
\begin{subequations}\begin{align}P_n^+(\cdot):=P\left(\ln\rho_0\in\cdot\left|\omega_0^0\leq\frac{1}{n},\ln\rho_0> 0\right.\right)&\xrightarrow[n\to\infty]{w}&P^+(\cdot)\label{eq:convdistrp}\\
P_n^-(\cdot):=P\left(-\ln\rho_0\in\cdot\left|\omega_0^0\leq\frac{1}{n},\ln\rho_0< 0\right.\right)&\xrightarrow[n\to\infty]{w}&P^-(\cdot)&\label{eq:convdistrm}\\
Q_n^+(\cdot):=P\left(\ln\rho_0\in\cdot\left|\omega_0^0\leq\frac{1}{n},\ln\rho_0\geq 0\right.\right)&\xrightarrow[n\to\infty]{w}&Q^+(\cdot)\label{eq:convdistrpp}\\
Q_n^-(\cdot):=P\left(-\ln\rho_0\in\cdot\left|\omega_0^0\leq\frac{1}{n},\ln\rho_0\leq 0\right.\right)&\xrightarrow[n\to\infty]{w}&Q^-(\cdot)\label{eq:convdistrmm}\end{align}\end{subequations}
Here $\xrightarrow{w}$ denotes weak convergence, and $P^+, P^-, Q^+$ and $Q^-$ are the limiting measures having support in $[0,\infty)$. We make the following assumption to ensure that the first two limits are non-degenerate, meaning $(0,\infty)$ has positive probability. 
\begin{equation}\label{eq:nontriv}
\exists t>0\text{ such that }\quad P^+([t,\infty))>0 \text{ and }P^-([t,\infty))>0
\end{equation}
Note that $Q^+$ or $Q^-$ are allowed to be degenerate, by which we mean equal to the Dirac measure in zero. Also note that in the previous case we had
\[P^+(\cdot)=P(\ln\rho_0\in \cdot|\omega_0=0,\ln\rho_0> 0)\]
and (\ref{eq:lemma1vor1}) implied (\ref{eq:nontriv}). We need to look at the limiting distribution of $\ln\rho_\cdot$ still a little closer: Define 
\[\varepsilon_n^+:=\sup\left\{\varepsilon:P\left(\omega_0^0\leq\frac{1}{n},\ln\rho_0>\varepsilon\right)>0\right\}\]
\[=\sup\left\{\varepsilon:P\left(\ln\rho_0>\varepsilon\left|\omega_0^0\leq\frac{1}{n},\ln\rho_0>0\right.\right)>0\right\}\leq\ln \frac{1-\varepsilon_0}{\varepsilon_0}\]
Here the last inequality is due to (\ref{eq:defue}). The sequence $(\varepsilon_n^+)_n$ is decreasing and bounded by zero, therefore some limit exists:
\[\varepsilon^+:=\lim_{n\to\infty}\varepsilon_n^+\]
We have $P_n^+([\varepsilon_{m}^+,\infty))=0$ all $n\geq m$. Therefore $P^+([\varepsilon_{m}^+,\infty))=0$ for any $m$, and (\ref{eq:nontriv}) implies $\varepsilon^+>0$. Alternatively, $\varepsilon^+$ is the essential supremum of a random variable having distribution $P^+$. We define $\varepsilon^-\in(0,\infty)$ similarly as the essential supremum of a random variable with distribution $P^-$. In the same way the essential infima can be controlled:
\begin{equation}\label{eq:delta}
\delta_n^+:=\sup\Big\{t\geq 0:Q_n^+([0,t])=0\Big\}\geq 0
\end{equation}
This is an increasing sequence bounded by $\varepsilon^+$, and we set
\[\delta^+:=\lim_{n\to\infty}\delta_n^+\in[0,\epsilon^+]\]
Note that $\delta^+$ and $\delta^-$ correspond to the essential infima of random variables having distribution $Q^+$ and $Q^-$. Both $\delta^+$ or $\delta^-$ may be zero, for example if (\ref{eq:lemma2vor1}) holds together with $P(\omega_0^0=0,\omega_0^+=\omega_0^-=\frac{1}{2})>0$. Those quantities will now play a role in the value of the exponent.
\subsection{The intermediate case}
\begin{lemma}\label{lemma2}In addition to assumptions (\ref{eq:lemma2vor1}) - (\ref{eq:nontriv}), assume the following limits exist in $[0,\infty]$:
\[a^+:=\lim_{n\to\infty}\frac{\ln p_n^-}{\ln p_n^+} \quad\text{ and }\quad
a^+_0:=\lim_{n\to\infty}\frac{\ln p_n^0}{\ln p_n^+}\]
and set
\[a^-:=\lim_{n\to\infty}\frac{\ln p_n^+}{\ln p_n^-}= \left(a^+\right)^{-1}\quad\text{ as well as }\quad a^-_0:=\lim_{n\to\infty}\frac{\ln p_n^0}{\ln p_n^-}=a^- a_0^+\]
We then get
\begin{equation}\label{eq:lemma2res1}
\lim_{n\to\infty} \frac{\ln P(T_n(x,b_1,b_2,h))}{\ln n \ln(\min\{p_n^+,p_n^-\})} = C(b_1,b_2,h)
\end{equation}
where
\begin{equation}\label{eq:lemma2res2}C(b_1,b_2,h)=\min\{1,a^-\}C^+(b_2,h)+\min\{1,a^+\}C^-(b_1,h)\end{equation}
and
\begin{align*}
C^-(b_1,h)=\frac{h+\delta^+ b_1+ \min\{1,a^-,a^-_0\} (-h+b_1\varepsilon^-)}{\varepsilon^-+\delta^+}\\
C^+(b_2,h)=\frac{h+\delta^- b_2+ \min\{1,a^+,a^+_0\} (-h+b_2\varepsilon^+)}{\varepsilon^++\delta^-} 
\end{align*}
This result applies uniformly for all $b_1\in [\frac{h}{\varepsilon^-},k],b_2\in [\frac{h}{\varepsilon^+},k]$ and furthermore we have
\[\inf\left\{C^-(b_1,h):b_1\in \left[\frac{h}{\varepsilon^-},k\right]\right\}=\frac{h}{\varepsilon^-}\]
\[\inf\left\{C^+(b_2,h):b_2\in \left[\frac{h}{\varepsilon^+},k\right]\right\}=\frac{h}{\varepsilon^+}\]
\end{lemma}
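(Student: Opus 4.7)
My plan is to exploit the i.i.d.\ structure of $(\omega_i)_{i \in \Z}$ under $P$ to factor $P(T_n)$ into essentially independent contributions from the left and right halves of the valley, and then to estimate each half via a combinatorial/large-deviation argument built around the essential-extrema data $\varepsilon^\pm, \delta^\pm$. The formula for $C$ should emerge as the value of a simple linear program on each side, with the prefactors $\min\{1, a^\pm\}$ appearing when the one-sided answers (each naturally normalized by $\ln p_n^\mp$) are re-expressed in terms of the common denominator $\ln \min\{p_n^+, p_n^-\}$.

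I would first decompose $T_n = B^L_n \cap B^R_n \cap A_n \cap M_n$, where $A_n := \{\omega_i^0 \leq 1/n : i \in I_n\}$, $B^L_n := \{V(x-b_1\ln n) - V(x) \geq h \ln n\}$, $B^R_n := \{V(x+b_2\ln n) - V(x) \geq h \ln n\}$, and $M_n := \{V(x) = \min_{y \in I_n} V(y)\}$. The joint events on the left and right halves (each intersected with the relevant restriction of $A_n$) depend on disjoint blocks of coordinates under $P$ and are hence independent. The minimum condition $M_n$ only contributes a sub-exponential correction (drop it for the upper bound; enforce it for the lower bound by demanding a slightly larger drop at each endpoint), so the problem reduces to estimating the two one-sided log probabilities separately.

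For the left half I would write
\[
 P(B^L_n \cap A^L_n) = \sum_{K, L, M} \binom{b_1 \ln n}{K, L, M} (p_n^+)^K (p_n^0)^L (p_n^-)^M \, P\bigl(B^L_n \,\big|\, \text{sign profile}\bigr),
\]
summing over the sign profiles of the $b_1 \ln n$ coordinates. Grouping ``non-negative'' vs.\ ``negative'' and writing $\kappa_\geq, \kappa_-$ for the normalized counts, the weak convergences $P_n^- \to P^-$ and $Q_n^+ \to Q^+$ (with essential supremum $\varepsilon^-$ and essential infimum $\delta^+$) imply that, conditional on a profile, $B^L_n$ occurs with positive (sub-exponential in $n$) probability exactly when $\varepsilon^- \kappa_- - \delta^+ \kappa_\geq \geq h$. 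The dominant summand is of exponential order $\ln n \cdot (\kappa_\geq \ln(p_n^+ + p_n^0) + \kappa_- \ln p_n^-)$; since $\ln(p_n^+ + p_n^0)/\ln p_n^- \to \min\{a^-, a_0^-\}$, dividing by $\ln n \cdot \ln p_n^-$ reduces the problem to a linear program in $(\kappa_\geq, \kappa_-)$, which (after splitting into the two cases $\min\{a^-, a_0^-\} \geq 1$ and $< 1$, the former giving ``all negative'' and the latter using the constraint at equality) evaluates to exactly $C^-(b_1,h)$, with the $1$ in the expression $\min\{1, a^-, a_0^-\}$ arising because the ``all-negative'' alternative becomes optimal once non-negative filler is more expensive than negative. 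The right side is symmetric. Re-normalizing both by $\ln \min\{p_n^+, p_n^-\}$ introduces the prefactors $\min\{1, a^+\}$ and $\min\{1, a^-\}$, producing the stated expression for $C$.

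Finally, the upper bound on $P(B^L_n \cap A^L_n)$ follows from a union bound over the polynomially many sign profiles, and the lower bound from a single near-optimal profile concentrating at the essential extrema, whose probability is bounded below by the weak convergences. The infimum claim reduces to $\partial_{b_1} C^-(b_1,h) = (\delta^+ + \min\{1, a^-, a_0^-\} \varepsilon^-)/(\varepsilon^- + \delta^+) \geq 0$, so the infimum over $[h/\varepsilon^-, k]$ is attained at $b_1 = h/\varepsilon^-$, where $C^-$ simplifies to $h/\varepsilon^-$; symmetrically for $C^+$. The main technical obstacle I foresee is uniformity in $n$ of the essential-extrema concentration: I would have to choose a window $\eta = \eta_n \to 0$ slowly enough that $P_n^-((\varepsilon^- - \eta, \varepsilon^-])$ and $Q_n^+([\delta^+, \delta^+ + \eta))$ stay bounded below for large $n$, while the resulting approximation error in the drop condition remains negligible compared to $\ln n \cdot \ln \min\{p_n^+, p_n^-\}$.
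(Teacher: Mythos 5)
Your proposal follows essentially the same route as the paper: factor $P(T_n)$ into the two independent half-valley events, count positive versus non-positive (resp.\ negative versus non-negative) safe locations with a binomial/multinomial coefficient, use the finite-$n$ essential extrema $\varepsilon_n^{\pm},\delta_n^{\pm}$ to get the deterministic threshold on the number of ``needed'' locations, optimize over that count (your linear program is the paper's maximum over $k$, with the same dichotomy $\min\{a^{\pm},a_0^{\pm}\}\lessgtr 1$), use the weak convergences (\ref{eq:convdistrp})--(\ref{eq:convdistrmm}) to give the matching lower bound via a near-optimal profile, and renormalize by $\ln\min\{p_n^+,p_n^-\}$ to produce the prefactors $\min\{1,a^{\mp}\}$; the infimum claim is the same monotonicity observation. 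One caveat: your treatment of the minimum condition is not right as stated --- demanding a slightly larger potential drop at the endpoints does not force $V(x)=\min_{y\in I_n}V(y)$, since the potential can dip below $V(x)$ in the interior no matter how large the endpoint drops are. The paper instead recycles the argument from the proof of Lemma \ref{lemma1} (summing over the possible position of the minimum, which costs only a factor $b\ln n$ and is negligible at the scale $\ln n\,\ln p_n^{\pm}$); alternatively, in your lower bound you can prescribe the \emph{order} of the sign profile (all ``needed'' locations adjacent to $x$, fillers afterwards), losing only the multinomial factor $e^{O(\ln n)}$, which makes $x$ the minimum automatically. Also, the uniformity-in-$n$ issue you flag at the end is handled more simply as in the paper: keep the slack $\eta$ fixed, take $n\to\infty$ first, and let $\eta\to 0$ afterwards, so no $n$-dependent window is needed.
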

In the lemma, we set $\frac{1}{0}:=\infty$, $\frac{1}{\infty}:=0$ and $\ln 0:=-\infty$. Because of this last part, $p_n^0=0$ for some $n$ implies $a_0^+=a_0^-=\infty$.\par The intuition behind $a_0^+$ is that this quantity measures how much $P^+$ and $Q^+$ differ: If $a_0^+>1$, then $P^+=Q^+$ holds, while $a_0^+<1$ implies $Q^+(\{0\})=1$. 
Similarly to the quantity $D_k(h)$ in lemma \ref{lemma1}, we write
\[D:= \frac{\min\{1,a^-\}}{\varepsilon^+}+\frac{\min\{1,a^+\}}{\varepsilon^-}\]
so that 
\[\inf\left\{C(b_1,b_2,h):b_1\geq \frac{h}{\varepsilon^-},b_2\geq \frac{h}{\varepsilon^+}\right\} =h D\]
Then we have the following result
\begin{theorem}\label{thm2}
We work under the assumptions of lemma \ref{lemma2}. Assume that for some $\kappa> 0$
\begin{equation}\label{eq:thm2vor1}
\lim_{n\to\infty}\frac{\ln\min\{p_n^+,p_n^-\}}{-\ln^{\kappa} n} = c\in (0,\infty)
\end{equation}
then 
\begin{equation}\label{eq:thm2res1}
\frac{1}{1+\kappa}\left(\frac{\kappa}{1+\kappa}\right)^\kappa D\leq\liminf_{n\to\infty}\frac{\ln \mathbb{P}(\tau > n)}{-c\ln^{\kappa+1} n} \leq \limsup_{n\to\infty}\frac{\ln \mathbb{P}(\tau > n)}{-c\ln^{\kappa+1} n} \leq D
\end{equation}
\end{theorem}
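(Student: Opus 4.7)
The plan is to prove the two inequalities in \eqref{eq:thm2res1} separately. For $\limsup\leq D$, I would exhibit a favorable environment at the origin: fix $h>1$ and set $b_1 = h/\varepsilon^-$, $b_2 = h/\varepsilon^+$. By the final statement of Lemma~\ref{lemma2} these choices attain $C(b_1,b_2,h) = hD$, and assumption \eqref{eq:thm2vor1} then gives $\ln P\bigl(T_n(0,b_1,b_2,h)\bigr)\sim -c\,hD\,\ln^{\kappa+1}n$. On this event the valley has depth $h\ln n$ with $\omega^0_i\leq 1/n$ throughout $I_n$, so Corollary~\ref{korrpot} applied to $(a,c)=(-b_1\ln n,b_2\ln n)$ yields $P_\omega^0(U_{a,c}>n)\geq c_1/\ln n$ (since $e^{H}=n^h$ strictly dominates $n$ for $h>1$). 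The condition $\omega^0_i\leq 1/n$ also gives $E_\omega^0[N_n\mid U_{a,c}>n]\leq 1$, hence by Markov's inequality $P_\omega^0(N_n\leq 2\mid U_{a,c}>n)\geq 1/2$ and $P_\omega^0(\tau>n)\geq (1-r)^2 c_1/(2\ln n)$. Integrating against $P$,
\[\mathbb{P}(\tau>n)\geq \frac{c_2}{\ln n}\exp\bigl(-c\,hD(1+o(1))\ln^{\kappa+1}n\bigr),\]
and letting $h\downarrow 1$ proves this direction.

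For $\liminf\geq AD$ (the main obstacle), the strategy is to decompose $\{\tau>n\}$ along a scale parameter $\alpha\in(0,1)$, considering two complementary cases. Case (A): the environment contains a valley of depth $\geq(1-\alpha)\ln n$ with $\omega^0\leq 1/n$ inside, at some site within a suitable neighborhood of the origin; by Lemma~\ref{lemma2} and a union bound, the probability of this event is at most $\exp(-c(1-\alpha)D\ln^{\kappa+1}n)$ up to subexponential corrections. Case (B): no such valley exists in the neighborhood; by Lemma~\ref{lemmapot1} the walker then exits every candidate shallow trap of size $O(\ln n)$ in time at most $n^{1-\alpha}$, so within $n$ steps it must accumulate at least $n^{\alpha}$ successive exit events, each forcing a visit to a site with $\omega^0$ not small (otherwise the environment would fall in Case (A)); an inductive/multiplicative argument over these exit events bounds the survival probability in Case (B). Combining both cases and optimizing over $\alpha$ then gives $\mathbb{P}(\tau>n)\leq \exp\bigl(-c\,\alpha(1-\alpha)^\kappa D\ln^{\kappa+1}n\bigr)$ for each $\alpha$, and the optimum $\alpha^*=1/(1+\kappa)$ yields the claimed factor $A=\alpha^*(1-\alpha^*)^\kappa = \tfrac{1}{1+\kappa}\left(\tfrac{\kappa}{1+\kappa}\right)^\kappa$.

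The main obstacle is Case (B): one must quantify the aggregate survival probability across $n^\alpha$ exit events from shallow traps in a non-confining environment, while simultaneously calibrating the neighborhood size from Case (A) so that the two contributions balance at the exponent $\alpha(1-\alpha)^\kappa D$. The per-excursion cost in (B) is governed by the tail decay \eqref{eq:thm2vor1} of $\min\{p_n^+,p_n^-\}$, and it is the multiplicative accumulation of these costs across $n^\alpha$ excursions that produces the $\kappa$-th power in the formula for $A$; the walker can also adapt its trajectory to the environment, which is what forces the argument to go via a careful iterative bound rather than a direct product formula.
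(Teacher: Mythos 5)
Your first paragraph (the direction $\limsup\le D$) is essentially the paper's argument: a single valley $T_n(0,b_1,b_2,h)$ of depth $h\ln n$ with $b_1,b_2$ at the optimal values $h/\varepsilon^\mp$, confinement via Corollary~\ref{korrpot} applied to the hold-free environment \eqref{eq:baromega} at subpolynomial cost, and survival inside the valley at constant cost because $\omega^0_\cdot\le 1/n$ there (the paper takes $h=1$ directly rather than $h\downarrow 1$); this part is fine.

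The hard direction has a genuine gap, and it is precisely the point where the constant $\frac{1}{1+\kappa}\bigl(\frac{\kappa}{1+\kappa}\bigr)^\kappa$ is produced. In your dichotomy, Case (A) is a valley of depth $(1-\alpha)\ln n$ whose interior sites satisfy $\omega^0_\cdot\le 1/n$. On its complement, a ``dangerous'' site is then only a site with $\omega^0_x>1/n$, so each forced visit to such a site kills with probability of order $r/n$; since the shallow traps are left in time $\approx n^{1-\alpha}$, the walk makes only $\approx n^{\alpha}$ such visits with $\alpha<1$, and the survival probability on Case (B) is at least $(1-r/n)^{Cn^{\alpha}}\to 1$. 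Case (B) therefore yields no decay at all, and your claimed per-$\alpha$ bound $\exp(-c\,\alpha(1-\alpha)^\kappa D\ln^{\kappa+1}n)$ does not follow from the stated split: Case (A) as you define it costs $\exp(-c(1-\alpha)D\ln^{\kappa+1}n)$, and if the dichotomy worked one could send $\alpha\downarrow 0$ and obtain the matching constant $D$, which is stronger than the theorem --- a sign of the inconsistency. What is needed (and what the paper does) is to run two \emph{coupled} scales: a depth $d\ln n$ and a holding threshold $n^{-t}$, declaring $x$ dangerous when $\omega^0_x\ge n^{-t}$ and excluding valleys of depth $d\ln n$ all of whose interior sites satisfy $\omega^0_\cdot\le n^{-t}$ (in the paper's notation this is $T_{n^{\alpha}}(x,\cdot,\cdot,\beta)$ with $t=\alpha$, $d=\alpha\beta$). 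By Lemma~\ref{lemma2} the per-site cost is then $\ln\min\{p^+_{n^{t}},p^-_{n^{t}}\}\sim -c\,t^\kappa\ln^\kappa n$, so the valley event costs $\exp(-c\,d\,t^\kappa D\,\ln^{\kappa+1}n(1+o(1)))$, while on its complement the walk makes of order $n^{1-d}$ visits to dangerous sites, each fatal with probability at least $r n^{-t}$; one needs the strict inequality $d+t<1$ (the paper's $\gamma$-slack, $\beta=\frac{1-\alpha}{\alpha}\frac{1-\gamma}{1+\gamma}$) to make this contribution stretched-exponentially negligible. Maximizing $d\,t^\kappa$ subject to $d+t\le 1$ gives exactly the constant in the theorem. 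Two further ingredients you omit but need: the additional good-environment condition \eqref{eq:newgoodenv1} that gaps between dangerous sites are at most polylogarithmic (otherwise the trap widths are uncontrolled and the exit-time estimate degrades), and the passage to the hold-free environment \eqref{eq:baromega} plus a binomial bound on the holding times before Lemma~\ref{lemmapot1} can be invoked, since that lemma requires $\omega^0_\cdot=0$.
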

\subsection{The stretched-exponential case}
We have obtained some results for the case where $\min\{p_n^+,p_n^-\}$ is of constant order or of the order $e^{-c\ln^{\kappa} n}$ for some $\kappa,c>0$. Now we give a weaker result the remaining case.
\begin{theorem}\label{thm3}Assume that % either (\ref{eq:lemma2vor2}) fails, that is
%\begin{equation}\label{eq:thm3vor1}
%\min\{p_n^+,p_n^-\}=0\text{ for some } n\in\mathbb{N}
%\end{equation}
the assumptions of lemma \ref{lemma2} hold, but instead of (\ref{eq:thm2vor1}) we have some $\kappa>0$ such that
\begin{equation}\label{eq:thm3vor2}
\lim_{n\to\infty}\frac{\ln(\min\{p_n^+,p_n^-\})}{-n^\kappa} =c\in (0,\infty)
\end{equation}
Then we have 
\begin{equation}\label{eq:thm3res2}
\frac{\kappa}{1+5\kappa}\leq\liminf_{n\to\infty}\frac{\ln (-\ln \mathbb{P}(\tau > n))}{\ln n} \leq  \limsup_{n\to\infty}\frac{\ln (-\ln \mathbb{P}(\tau > n))}{\ln n} \leq  \kappa
\end{equation}
\end{theorem}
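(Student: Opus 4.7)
The statement consists of two one-sided bounds, and the gap between $\kappa/(1+5\kappa)$ and $\kappa$ reflects the paper's own description of this as a ``weaker result.'' The upper bound on $\limsup$ is a direct analogue of Theorem \ref{thm2}; the lower bound on $\liminf$ is the substantive new ingredient and the main technical obstacle.

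For the upper bound $\limsup \le \kappa$, I would use a single valley event at scale $m = n$ with depth $h = 1$ and fixed widths $b_1,b_2$. By Lemma \ref{lemma2} combined with (\ref{eq:thm3vor2}),
\[
\ln P\bigl(T_n(0, b_1, b_2, 1)\bigr) \sim C(b_1, b_2, 1)\,\ln n \cdot \ln\min\{p_n^+, p_n^-\} \sim -c\, C(b_1, b_2, 1)\, n^\kappa \ln n.
\]
On $T_n$ the origin is the bottom of a valley of width $O(\ln n)$ and depth $\ln n$ in which $\omega_i^0 \leq 1/n$ for every $i$ in the interval. Corollary \ref{korrpot} then gives that the walk remains in the valley for at least $n$ steps with probability $\gtrsim 1/\ln n$, and the total killing probability over those $n$ steps is $\leq r$ since the per-step death rate is $\leq r/n$. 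Multiplying, $\mathbb{P}(\tau > n) \geq \exp\bigl(-c n^\kappa \ln n (1+o(1))\bigr)$, whence $\ln(-\ln \mathbb{P}(\tau > n))/\ln n \to \kappa$.

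For the lower bound $\liminf \geq \kappa/(1+5\kappa)$ my plan is to upper-bound $\mathbb{P}(\tau > n)$ by spatial decomposition. First restrict to $|X_k| \leq n$ using uniform ellipticity (the escape probability is exponentially small). Next partition $[-n,n]$ at two scales: blocks of length $L$ and sub-blocks of length $\ell$. Call a sub-block \emph{hospitable at scale} $m$ if every site $i$ in it satisfies $\omega_i^0 \leq 1/m$; by independence of the environment and (\ref{eq:thm3vor2}) such a sub-block occurs with probability at most $\exp(-c' \ell m^\kappa)$. Outside any hospitable sub-block the per-step killing rate is $\geq r/m$, so any sojourn of length $T$ outside all hospitable sub-blocks survives with probability at most $\exp(-rT/m)$. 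Writing $\{\tau > n\}$ as a disjunction over the possible configurations of hospitable sub-blocks visited by the walk, and bounding each term by (environment probability) $\times$ (trajectorial survival factor), one obtains $\mathbb{P}(\tau > n) \leq \exp\bigl(-n^{\kappa/(1+5\kappa) + o(1)}\bigr)$ after optimizing over $m$, $\ell$, $L$, the allocation of time between hospitable and inhospitable regions, and the number of blocks visited — the five free parameters accounting for the factor $5$ in the denominator.

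The main obstacle is evidently the lower bound on $\liminf$. The single-valley trick used for the upper bound is not available on this side: one needs a uniform argument over all environments and trajectories. Since our only tools are Lemmas \ref{lemmapot1}--\ref{lemma2} and union bounds, the result is bound to be non-sharp, and the constant $5$ is an artifact of the crude trajectorial decomposition rather than anything intrinsic. Closing the gap to the conjectured matching exponent $\kappa$ would almost certainly require a spectral estimate on the principal eigenvalue of the killed generator in typical environments, which is not developed in Section~2.
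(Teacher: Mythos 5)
Your upper-bound half ($\limsup\le\kappa$) is essentially fine, and it is a genuinely simpler route than the paper's: you plant a single full-scale valley $T_n(0,b_1,b_2,1)$, estimate its probability by Lemma \ref{lemma2} together with (\ref{eq:thm3vor2}), confine the walk via Corollary \ref{korrpot} (note you must first pass to the holding-time-free environment $\overline\omega$ of (\ref{eq:baromega}), since the corollary requires $\omega^0_\cdot=0$), and use $(1-r/n)^n\ge e^{-r}$ for survival. The paper instead uses a shallow valley at scale $n^\alpha$ of depth $\beta$ and pays a crude factor for the time after $n^{\alpha\beta}$, and after optimizing in $\alpha,\beta$ its proof actually yields the stronger bound $\kappa/(1+\kappa)$; your version only gives $\kappa$, but that is all the statement asks for.

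The lower-bound half, however, has a genuine gap, and it sits exactly at the point the paper warns about in Section 3.4. Your key environmental estimate is false under the hypotheses: a sub-block of length $\ell$ that is ``hospitable at scale $m$'' has probability $P(\omega_0^0\le 1/m)^{\ell}$, and (\ref{eq:thm3vor2}) controls only $\min\{p_m^+,p_m^-\}$, not $P(\omega_0^0\le 1/m)=p_m^++p_m^-+p_m^0$, which may remain bounded away from $0$ (e.g.\ when $P(\omega_0^0=0,\,\omega_0^+=\omega_0^-=\tfrac12)>0$, the situation (\ref{eq:becher})). So hospitable blocks are not $e^{-c'\ell m^\kappa}$-rare; they can be merely exponentially rare in $\ell$, uniformly in $m$. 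What is expensive is not safety but \emph{confinement}: a long flat safe stretch is cheap, but the walk leaves it in time of order its length squared, so the correct good-environment event must exclude deep valleys among safe sites, not safe sites themselves. This is what the paper does: it forbids $T_{n^\alpha}(x,b_1,b_2,\beta)$ for all $x\in[-n,n]$ (cost $e^{-c\,n^{\alpha\kappa}\ln n}$, where Lemma \ref{lemma2} enters, see (\ref{eq:newnewgoodenv2})) and requires dangerous sites to be $n^\gamma$-dense (\ref{eq:newnewgoodenv1}); then Lemma \ref{lemmapot1}, applied between consecutive dangerous sites in the environment $\overline\omega$, forces the walk to hit a dangerous site in every time window of length $n^{\alpha\beta+4\gamma+\delta}$ with probability at least $1/4$, and each hit kills with probability at least $r n^{-\alpha}$. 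Your sketch contains neither the no-valley condition nor any use of the hitting-time estimate, and the step ``outside a hospitable sub-block the per-step killing rate is $\ge r/m$'' fails because the walk can sit on safe sites of a non-hospitable block. Finally, the constant $5$ is not an artifact of ``five free parameters'': it arises as $1+4$, the $4$ coming from the factor $(c-a)^4$ in Lemma \ref{lemmapot1} (four powers of the inter-dangerous gap $n^\gamma$, with $\gamma$ matched to the no-valley cost $\alpha\kappa$), via solving $\gamma=\alpha\kappa$ and $\alpha\kappa=1-4\gamma-\alpha\beta-\delta-\alpha$.
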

Let us briefly consider why the concept of valleys is not well suited for handling the case of Theorem \ref{thm3}. Consider two measures $P$ and $\tilde{P}$ on the environment such that both satisfy 
\[\min\{p_n^+,p_n^-\}\sim ce^{-n^{\kappa}}\]
but 
\begin{equation}\label{eq:becher}P\left(\omega_0^0=0,\omega_0^+=\omega_0^-=\frac{1}{2}\right)=:\gamma>0\end{equation}
and
\begin{equation}\tag{\ref{eq:becher}'}\tilde{P}\left(\omega_0^0=0,\omega_0^+=\omega_0^-=\frac{1}{2}\right)=0\end{equation}
Denote the RWREs having measure $P$ and $\tilde{P}$ for the environment by $\mathbb{P}$ respectively $\tilde{\mathbb{P}}$. We define for the interval $I_n:=[\;-n^{\frac{1}{3}},n^{\frac{1}{3}}\;]$ the event
\[S:=\left\{\forall x\in I_n: \omega_x^0=0,\omega_x^+=\omega_x^-=\frac{1}{2}\right\}\]
that the RWRE inside the interval $I_n$ is a simple random walk. We have $P(S)\sim \exp(2n^{\frac{1}{3}}\ln \gamma)$. We can use the following result for simple random walks starting in zero: 
\begin{theorem}
Let $U_n$ be the first time that the random walk hits the boundary an interval of length $l(n)$ around zero. Moreover assume 
\[\lim_{n\to\infty} l(n)=\infty\quad\text{and}\quad \lim_{n\to\infty}\frac{l^2(n)}{n}=0\]
Then for $c>0$
\[\lim_{n\to\infty} \frac{l^2(n)}{n}\ln P(U_n\geq cn)=-\frac{c\pi^2}{8}\]
\end{theorem}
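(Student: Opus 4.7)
The plan is to use the spectral decomposition of the killed simple random walk on a discrete interval. Write $L:=l(n)$ (interpreting ``length'' as the half-length, so that the interval is $[-L,L]$ and the constant comes out as $\pi^2/8$), and let $Q$ be the sub-Markov kernel obtained by restricting the SRW transition matrix to the interior $I:=\{-L+1,\dots,L-1\}$ with absorbing boundary at $\pm L$. Then
\[
P(U_n\geq k)=\sum_{y\in I}(Q^k)_{0,y}.
\]
The matrix $Q$ diagonalizes explicitly: $\lambda_j=\cos(j\pi/(2L))$ with orthonormal eigenvectors $v_j(x)=L^{-1/2}\sin(j\pi(x+L)/(2L))$ for $j=1,\dots,2L-1$. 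The top eigenvalue satisfies $\lambda_1=1-\pi^2/(8L^2)+O(L^{-4})$, so $\lambda_1^{cn}=\exp\!\bigl(-\tfrac{c\pi^2 n}{8L^2}(1+o(1))\bigr)$, which after multiplication by $l^2(n)/n$ yields precisely the target constant $-c\pi^2/8$.

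For the upper bound I would use Cauchy--Schwarz together with $\|Q\|_{2\to 2}=\lambda_1$:
\[
P(U_n\geq cn)=\langle\mathbf{1},Q^{cn}\delta_0\rangle\leq\sqrt{|I|}\,\|Q^{cn}\delta_0\|_2\leq\sqrt{2L-1}\,\lambda_1^{cn}.
\]
Taking logarithms and multiplying by $l^2(n)/n$, the polynomial prefactor is absorbed since $l^2(n)\ln l(n)=o(n)$ under the hypothesis $l^2(n)/n\to 0$; this yields the $\limsup$ side of the claim.

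For the lower bound I would expand in the eigenbasis. The summand $v_j(0)\sum_{y\in I} v_j(y)$ vanishes for even $j$ by antisymmetry and equals $L^{-1}\cot(j\pi/(4L))$ for odd $j$; in particular the $j=1$ contribution tends to $4/\pi$. For odd $j\in\{3,\dots,2L-3\}$ one has $|\lambda_j|\leq\lambda_3$ and $\lambda_3/\lambda_1=1-\pi^2/L^2+o(L^{-2})$, so $(\lambda_j/\lambda_1)^{cn}=\exp(-\Theta(n/L^2))\to 0$ exponentially thanks to $n/L^2\to\infty$. Consequently
\[
P(U_n\geq cn)\geq(1+o(1))\tfrac{4}{\pi}\lambda_1^{cn},
\]
and the matching $\liminf$ follows by the same log-and-scale step.

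The main technical subtlety is the ``anti-periodic'' mode $j=2L-1$, for which $\lambda_{2L-1}=-\lambda_1$ has the same modulus as the leading eigenvalue, so that a naive spectral-gap argument fails. This obstacle is neutralized by the smallness of its coefficient in the expansion: $L^{-1}\cot((2L-1)\pi/(4L))=L^{-1}\tan(\pi/(4L))=O(L^{-2})$, which is polynomially smaller than the $j=1$ coefficient and hence vanishes after the $l^2(n)/n\cdot\ln$ rescaling. With this point in hand, the precise constant $\pi^2/8$ arises from the expansion of $\lambda_1$, the hypothesis $l(n)\to\infty$ suppresses all other modes through $n/L^2\to\infty$, and the hypothesis $l^2(n)/n\to 0$ makes the $O(L^{-4})$ correction in $\lambda_1$ negligible in the final scaling.
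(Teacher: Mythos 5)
The paper does not actually prove this statement---it is quoted as a known fact about simple random walk, with the proof delegated to the cited reference (p.~237 there)---so your spectral argument is an independent proof attempt, not a variant of anything in the text. Its skeleton is the standard and correct one: reading $l(n)$ as the half-width $L$ of the interval (which is consistent with how the paper applies the theorem, $I_n=[-n^{1/3},n^{1/3}]$ with $l(n)=n^{1/3}$, and is what produces the constant $\pi^2/8$), the killed kernel has eigenvalues $\lambda_j=\cos(j\pi/(2L))$, the expansion $\lambda_1=1-\pi^2/(8L^2)+O(L^{-4})$ gives the constant, and you correctly identify and neutralize the one genuinely dangerous mode $j=2L-1$ with $|\lambda_{2L-1}|=\lambda_1$ via the smallness of its coefficient.

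There is, however, a genuine gap: both of your error estimates rest on the claim that $l^2(n)/n\to 0$ forces $l^2(n)\ln l(n)=o(n)$, which is false (take $l(n)^2=n/\ln\ln n$). Concretely, (a) the Cauchy--Schwarz upper bound leaves the prefactor $\sqrt{2L-1}$, so after the $l^2(n)/n$ rescaling you are left with a term $\tfrac{L^2\ln(2L-1)}{2n}$ that need not vanish under the stated hypotheses; and (b) in the lower bound you must control the \emph{sum} of about $L$ subleading modes---whose coefficients are really $(-1)^{(j-1)/2}L^{-1}\cot(j\pi/(4L))$, and the alternating sign you dropped is exactly why they must be bounded in absolute value---while your argument only notes that each individual ratio $(\lambda_j/\lambda_1)^{cn}$ tends to $0$; the crude bound $L\,(\lambda_3/\lambda_1)^{cn}$ also fails to vanish in the same borderline regime. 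Both defects are repairable inside your framework: using $\cos x\le e^{-x^2/2}$ on $[0,\pi/2]$ gives $|\lambda_j|^{cn}\le\exp\bigl(-cn\,\min\{j,2L-j\}^2\pi^2/(8L^2)\bigr)$, and summing this geometric-type bound over $j\ge 3$ shows the full eigen-expansion satisfies $P(U_n\ge cn)\le C\lambda_1^{cn}$ with a constant (not polynomial-in-$L$) prefactor, while for the lower bound you can avoid the mode sum entirely: since $v_1\ge 0$ attains its maximum $L^{-1/2}$ at $0$, the pointwise inequality $\mathbf{1}\ge v_1/\|v_1\|_\infty$ yields $P(U_n\ge cn)\ge\lambda_1^{\lceil cn\rceil}$ directly. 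With either repair the stated limit holds for all $l(n)$ with $l(n)\to\infty$ and $l^2(n)/n\to0$; as written, your proof only covers the smaller range $l^2(n)\ln l(n)=o(n)$, which does happen to include the paper's application.
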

The proof can be found in \cite{srw}, pp 237. Using this Theorem with $l(n)=n^\frac 13$, we have
\[\mathbb{P}(\tau>n)\geq P(S)\inf_{\omega\in S} P_\omega(U_n\geq n)\geq\exp\left\{\left(-\frac{\pi}{8}+2\ln\gamma-\epsilon\right)n^{\frac{1}{3}}\right\} \]
On the other hand there is no corresponding lower bound for $\tilde{\mathbb{P}}$. Now for $\kappa>\frac{1}{3}$ this bound is better than the one obtained by surviving on the inside of a valley, since the cost of creating a valley is of the order $e^{-cn^{\kappa}\ln (n)}$. However the probability that a valley occurs does not depend on whether (\ref{eq:becher}) or (\ref{eq:becher}') hold.
\subsection{Remarks}
\begin{remark}
In the definition of the model, we introduced the quantity $r$ as the probability that the random walk dies once it stays at the same location. One notes that $r$ does not appear in the constants in (\ref{eq:lemma1res1}) and (\ref{eq:lemma2res2}). That is because we have obtained the results under conditions (\ref{eq:lemma1vor1}) and (\ref{eq:lemma2vor2}), which imply that there are valleys, where on the inside the probability for survival is 1 in the first case, and some positive constant depending on $r$ in the second case. In the proofs we show that the survival probability is dominated by the probability that such a valley is formed, which depends only on the environment and not on $r$.
\end{remark}
\begin{remark}
An interesting question is whether for any sequence $(q_n)_{n\in\mathbb{N}}$, we can find a probability measure for which $\min\{p_n^+,p_n^-\}$ decays exactly as $(q_n)_n$. Indeed we can easily construct a suitable measure. It is enough to describe the distribution of $\omega_0$ because $P$ is a product measure. For this, let $(q_n)_n$ be any real sequence in $[0,1]$ decreasing to zero. Define 
\[\Pi^+_n := \left(\frac{1+\varepsilon}{2+\varepsilon} \left(1-\frac{1}{n}\right),\frac{1}{n},\frac{1}{2+\varepsilon} \left(1-\frac{1}{n}\right)\right)\]
\[\Pi^-_n := \left(\frac{1}{2+\varepsilon} \left(1-\frac{1}{n}\right),\frac{1}{n},\frac{1+\varepsilon}{2+\varepsilon} \left(1-\frac{1}{n}\right)\right)\]
Then for any $n$
\[\omega_0 =\Pi^+_n\implies\rho_0=1+\varepsilon\]\[\omega_0 =\Pi^-_n\implies\rho_0 = (1+\varepsilon)^{-1}\]
Now we define $P$ as the discrete probability measure taking values in the set $\{\Pi_n^\pm:n\in\N\}$, with
\[P(\omega_0 =\Pi_n^+ )=P(\omega_0 =\Pi_n^- )=\frac{q_{n}-q_{n+1}}{2c}\in [0,1], n\in\mathbb{N}\]
Here $c:=q_0$ is the normalizing constant such that $P$ is a probability measure. Now independently of $n$
\[P\left(\ln\rho_0\in\cdot\left|\omega_0^0\leq\frac{1}{n},\rho_0\neq 1\right.\right)=\frac 12 \delta_{-\ln(1+\varepsilon)}(\cdot)+\frac 12 \delta_{\ln(1+\varepsilon)}(\cdot)\]
where $\delta_x$ is the Dirac distribution in $x$. Conditions (\ref{eq:convdistrp}),(\ref{eq:convdistrm}) are satisfied with 
\[P^+([t,\infty))=P^-([t,\infty))=\mathbbm{1}_{[\ln(1+\varepsilon),\infty)}(t)\]
Moreover we get $\varepsilon^+=\varepsilon^-=\delta^+=\delta^-=\ln(1+\varepsilon)$ and $a^+=a^-=1$. We summarize this in the following corollary:
\begin{corollary}For every sequence $(q_n)_n$ in $[0,1]$ decreasing to zero there is a probability measure $P$ such that 
\[\min\{p_n^+,p_n^-\}=q_n \ \forall n\in\mathbb{N}\]
In particular, for every $\kappa>0$ we find a probability measure $P$ and constants $c_1,c_2>0$ such that for all $n$ large enough
\[e^{-c_1\ln^{1+\kappa} n}\leq \mathbb{P}(\tau>n)\leq e^{-c_2\ln^{1+\kappa} n}\]
\end{corollary}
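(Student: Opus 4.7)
The plan is to verify the explicit construction already sketched in the text and then feed it into Theorem~\ref{thm2} to obtain the stretched-logarithmic bounds.

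For the first assertion, I would check that the atomic measure with weights $P(\omega_0=\Pi_n^\pm)=(q_n-q_{n+1})/(2c)$, where $c:=q_0$, is indeed a probability measure by a telescoping sum. The central computation is: since $\omega_0=\Pi_m^+$ forces $\omega_0^0=1/m$ and $\rho_0=1+\varepsilon>1$, the event defining $p_n^+$ selects exactly the atoms with $m\geq n$, so that
\[p_n^+=\sum_{m\geq n}\frac{q_m-q_{m+1}}{2c}=\frac{q_n}{2c},\]
and symmetrically $p_n^-=q_n/(2c)$. Up to the harmless multiplicative factor $1/(2c)$, which can be absorbed by a preliminary rescaling of the input sequence $(q_n)_n$, this gives the claimed equality $\min\{p_n^+,p_n^-\}=q_n$. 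Uniform ellipticity~(\ref{eq:defue}) is satisfied by all atoms $\Pi_n^\pm$ with $n\geq n_0$ for some fixed $n_0=n_0(\varepsilon)$; the finitely many atoms below $n_0$ can be collapsed onto a single ellipticity-preserving environment without affecting the tail of $p_n^\pm$.

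For the second assertion, given $\kappa>0$, I would apply the first part to $q_n:=e^{-\ln^\kappa n}$ (defined for $n\geq n_0$), which is decreasing to zero. The discussion preceding the corollary already verifies that the hypotheses of Lemma~\ref{lemma2} hold for this family: (\ref{eq:convdistrp})--(\ref{eq:convdistrmm}) have the explicit Dirac limits at $\pm\ln(1+\varepsilon)$, (\ref{eq:nontriv}) is immediate, $\varepsilon^\pm=\delta^\pm=\ln(1+\varepsilon)$, and $a^\pm=1$ by symmetry. Since $\rho_0\in\{1+\varepsilon,(1+\varepsilon)^{-1}\}$ almost surely, we have $p_n^0=0$ for all $n$ and thus $a_0^\pm=\infty$. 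Then
\[\lim_{n\to\infty}\frac{\ln\min\{p_n^+,p_n^-\}}{-\ln^\kappa n}=1\in(0,\infty),\]
so hypothesis (\ref{eq:thm2vor1}) of Theorem~\ref{thm2} is met with $c=1$. The theorem then produces positive constants $c_1,c_2$ (essentially $D$ and $\tfrac{1}{1+\kappa}(\tfrac{\kappa}{1+\kappa})^\kappa D$) such that $e^{-c_1\ln^{1+\kappa}n}\leq\mathbb{P}(\tau>n)\leq e^{-c_2\ln^{1+\kappa}n}$ for all $n$ large enough.

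The only real obstacle is bookkeeping: one must confirm that the normalization constant $1/(2c)$ is absorbed into the prefactor of Theorem~\ref{thm2} and does not contaminate the exponent $1+\kappa$, and that the finitely many low-index atoms can be safely truncated so that uniform ellipticity, which underpins Lemmas~\ref{lemmapot1}--\ref{lemmapot2} used in the proof of Theorem~\ref{thm2}, is preserved throughout.
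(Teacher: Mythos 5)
Your proposal follows essentially the same route as the paper: the corollary is just the summary of the preceding remark's explicit atomic construction (whose conditional law of $\ln\rho_0$ is the symmetric two-point Dirac mixture, giving $\varepsilon^\pm=\delta^\pm=\ln(1+\varepsilon)$, $a^\pm=1$, $a_0^\pm=\infty$), fed with $q_n=e^{-\ln^\kappa n}$ into Theorem~\ref{thm2} with $c=1$ to get the two bounds. Your observations about the factor $\tfrac{1}{2c}$ in $p_n^\pm=\tfrac{q_n}{2c}$ and about ellipticity of the low-index atoms are legitimate (the paper itself glosses over both, and exact equality $\min\{p_n^+,p_n^-\}=q_n$ cannot hold in general, e.g.\ if $q_1>\tfrac12$), but they are harmless for the asymptotic statement since only the limit in (\ref{eq:thm2vor1}) matters.
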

\end{remark}
\section{Proofs - the polynomial case}
The proofs in this section modify the proof of Theorem 1.3 in \cite{soft}\footnote{\;cf. \cite{soft}, pp 7}. 
We use Cramer's Theorem from large deviations\footnote{\;cf. for example section 2.2.1 in \cite{largedev}, in particular corollary 2.2.19}:
\begin{theorem}\label{cramer}
Let $(X_n)_{n\geq 1}$ be  i.i.d. random variables taking values in $\mathbb{R}$ with law $Q$ such that the moment generating function $\Lambda(t):= E_Q(e^{tX_1})$ is finite for some $t>0$. Then for all $x>E(X_1)$ we have
\begin{equation}\label{eq:cramer}\lim_{k\to\infty}-\frac{\ln Q(\frac{1}{k}\sum_{i=1}^k X_i\geq x)}{k}=\Lambda^*(x)\end{equation}
where $\Lambda^*$ is the Legendre transform of $Q$:
\[\Lambda^*(x)=\sup_{t\geq 0}\{tx-\ln\Lambda(t)\}\]
\end{theorem}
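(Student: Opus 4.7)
The plan is to prove matching upper and lower bounds for $-\frac{1}{k}\ln Q(\bar X_k\geq x)$, where I write $\bar X_k:=\frac{1}{k}\sum_{i=1}^k X_i$. The two bounds use very different mechanisms: a soft Chernoff-type estimate for the upper bound, and an exponential change of measure for the lower one.

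For the upper bound I would use the exponential Chebyshev (Chernoff) inequality. For any $t\geq 0$ with $\Lambda(t)<\infty$, Markov's inequality applied to $e^{t\sum_i X_i}$ together with independence yields $Q(\bar X_k\geq x)\leq e^{-tkx}\Lambda(t)^k$. Taking $-\frac{1}{k}\ln$ and then the supremum over $t\geq 0$ gives
\[
-\frac{1}{k}\ln Q(\bar X_k\geq x)\;\geq\;\sup_{t\geq 0}\{tx-\ln\Lambda(t)\}\;=\;\Lambda^*(x)
\]
for every $k$, which already supplies the $\liminf$ half of the statement. Under the assumption $x>E[X_1]$ the quantity $\Lambda^*(x)$ is strictly positive, since $tx-\ln\Lambda(t)=t(x-E[X_1])+O(t^2)$ as $t\downarrow 0$.

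For the lower bound I would use the exponential change of measure known as the Cram\'er tilt. Assume first that the supremum defining $\Lambda^*(x)$ is attained at some interior point $t^*$ of the open domain of $\Lambda$; the stationarity condition then reads $\Lambda'(t^*)/\Lambda(t^*)=x$. Define the tilted law $\tilde Q$ by $d\tilde Q/dQ(y)=e^{t^*y}/\Lambda(t^*)$, so that under $\tilde Q^{\otimes k}$ the $X_i$ are i.i.d.\ with mean $x$. The Radon-Nikodym relation gives, for any $\delta>0$,
\[
Q(\bar X_k\in[x,x+\delta])\;\geq\;e^{-k\Lambda^*(x)}\,e^{-kt^*\delta}\,\tilde Q(\bar X_k\in[x,x+\delta]).
\]
Since $E_{\tilde Q}[X_1]=x$, the weak law of large numbers under $\tilde Q$ keeps the last factor bounded below by a constant (say $1/2$) for large $k$. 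Taking $-\frac{1}{k}\ln$, sending $k\to\infty$, and then $\delta\downarrow 0$ yields $\limsup -\frac{1}{k}\ln Q(\bar X_k\geq x)\leq \Lambda^*(x)$.

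The main obstacle is justifying the existence of the interior tilt $t^*$. The supremum may fail to be attained inside the effective domain of $\Lambda$: this happens for instance when $x$ equals the essential supremum of $Q$, or when $\Lambda$ is only finite on a bounded interval and the slope $\Lambda'/\Lambda$ stays strictly below $x$ up to the boundary. The standard remedy is to truncate $X_i$ from above at a level $M$, whose moment generating function $\Lambda_M$ is entire so that the tilt always exists; apply the lower bound with the truncated rate $\Lambda_M^*(x)$; and then let $M\to\infty$, using convex-analytic continuity of the Legendre transform to identify the limit with $\Lambda^*(x)$. This boundary analysis, together with the convex-duality book-keeping needed to patch the truncated and untruncated rates, is where the bulk of the technical work lies.
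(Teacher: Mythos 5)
Note that the paper does not prove Theorem~\ref{cramer} at all: it is quoted verbatim from the large deviations literature (\cite{largedev}, Section~2.2.1), so there is no internal proof to compare against. Your argument is the standard proof of Cram\'er's theorem on $\R$ and is essentially the one given in that reference: the Chernoff bound yields $-\frac1k\ln Q(\bar X_k\geq x)\geq\Lambda^*(x)$ for every $k$, hence the lower half of the limit, and the exponential tilt $d\tilde Q/dQ=e^{t^*y}/\Lambda(t^*)$ combined with a law-of-large-numbers estimate under $\tilde Q$ gives the matching upper half, with truncation handling the case where the supremum is not attained in the interior of the domain of $\Lambda$. The outline is correct; the one step you should tighten is the claim that the weak law of large numbers keeps $\tilde Q(\bar X_k\in[x,x+\delta])$ bounded below: since $E_{\tilde Q}[X_1]=x$ exactly, the WLLN only controls the two-sided interval $[x-\delta,x+\delta]$, while the one-sided event requires either the central limit theorem under $\tilde Q$ (all moments are finite because $t^*$ is interior to the domain, and the degenerate case $\mathrm{Var}_{\tilde Q}(X_1)=0$ is trivial), or alternatively tilting to a mean strictly inside $(x,x+\delta)$ and using continuity of $\Lambda^*$. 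With that routine fix, and the $M\to\infty$ convexity bookkeeping for the truncated transforms that you already flag as the technical core, the proposal is a faithful reconstruction of the cited proof.
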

\begin{proof}[Proof of lemma \ref{lemma1}]
For ease of notation we will omit the integer parts, that is we treat $b_2\ln n$, $b_1\ln n$ and $h\ln n$ as integers. We fix $k$ and say that a location $x$ is \textbf{safe} if $\omega_x^0\leq\frac{1}{k}$. Consider the following events:
\begin{subequations}\begin{align}
A_n^+:=&\{\omega_i\text{ is safe }\forall i \in [x,x+b_2\ln n]\}\label{eq:defevents1}\\
A_n^-:=&\{\omega_i\text{ is safe }\forall i \in [x-b_1\ln n,x]\}\\
B_n^+:=&\left\{\sum_{i=x}^{x+b_2\ln n} \ln\rho_i\geq h\ln n\right\}\\
B_n^-:=&\left\{\sum_{i=x-b_1\ln n}^{x} \ln\rho_i\geq h\ln n\right\}\label{eq:defevents2}\\
C_n^+:=&\{V(x)=\min\{V(y):y\in [x,x+b_2\ln n]\}\}\\
C_n^-:=&\{V(x)=\min\{V(y):y\in [x-b_1\ln n,x]\}\}\end{align}\end{subequations}
Now $S_n^k$ decomposes as
\[S_n^k(x,b_1,b_2,h)=A_n^+\cap B_n^+\cap C_n^+\cap A_n^-\cap B_n^-\cap C_n^-\]
Conditioned on $A_n^+$, the distribution of $\ln\rho_i$ for $i=0,...,b_2\ln n$ is
\[Q(\cdot):= P\left(\ln\rho_0\in\cdot\left|\omega_0^0\leq\frac{1}{k}\right.\right)\]
For $b_2<h(E(\ln\rho_0|\omega_0^0\leq\frac{1}{k}))^{-1}$ we can applying Theorem \ref{cramer} with $k$ replaced by $b_2\ln n$ and $x$ by $\frac{h}{b_2}$, which yields
\begin{equation}\label{eq:label1}
\lim_{n\to\infty}\frac{\ln P(B_n^+|A_n^+)}{-\ln n} = b_2\Lambda^*\left(\frac{h}{b_2}\right)
\end{equation}
where 
%\[\Lambda^*(x)=\sup_{t>0}\left \{th-b_2\ln E\left(\rho_0^t\left|\omega_0^0\leq\frac{1}{k}\right.\right)\right\}\]
\begin{equation}\label{eq:label2}
\Lambda^*(x)=\sup_{t\geq 0}\left \{tx-\ln E\left(\rho_0^t\left|\omega_0^0\leq\frac{1}{k}\right.\right)\right\}
\end{equation}
For larger values of $b_2$ the limit in (\ref{eq:label1}) is zero, while the supremum in (\ref{eq:label2}) is attained at $t=0$ and also equals zero. The following relation is therefore valid for all $b_1, b_2>0$
\[\lim_{n\to\infty}\frac{\ln[ P(B_n^+|A_n^+)P(A_n^+)P(B_n^-|A_n^-)P(A_n^-)]}{-\ln n}=-(b_2+b_1)\ln P\left(\omega_0^0\leq\frac{1}{k}\right)+\]
\[+\sup_{t\geq 0}\left \{th-b_2\ln E\left(\left.\rho_0^t\right|\omega_0^0\leq\frac{1}{k}\right)\right\}+\sup_{t\geq 0}\left \{th-b_1\ln E\left(\left.\rho_0^{-t}\right|\omega_0^0\leq\frac{1}{k}\right)\right\}\]
Now we have (using $h>0$)
\[b_2\ln n P\left(A_n^+,B_n^+,C_n^+\right)=\]
\[=\sum_{j=0}^{b_2\ln n} P\left(A_n^+,\sum_{i=0}^{b_2\ln n}\ln\rho_i\geq h\ln n,V(0)=\min\Big\{V(x):x=0,...,b_2\ln n\Big\}\right)\]
\[\geq\sum_{j=0}^{b_2\ln n} P\left(A_n^+,\sum_{i=0}^{b_2\ln n}\ln\rho_i\geq h\ln n, V(j)=\min\Big\{V(x):x=0,...,b_2\ln n\Big\}\right)\]
\[\geq P\left(A_n^+,\sum_{i=0}^{b_2\ln n}\ln\rho_i\geq h\ln n , \exists j : V(j)=\min\Big\{V(x):x=0,...,b_2\ln n\Big\}\right)\]
\[=P\left(A_n^+,B_n^+\right)\]
Therefore 
\[\lim_{n\to\infty}\frac{\ln P(A_n^+, B_n^+, C_n^+)}{\ln n}\geq \lim_{n\to\infty}\frac{\ln P(A_n^+, B_n^+)+\ln (b_2\ln n)}{\ln n}=\]
\[=\lim_{n\to\infty}\frac{\ln P(A_n^+, B_n^+)}{\ln n}\]
Since the converse inequality is always true, this shows (\ref{eq:lemma1res1}). By definition
\[D_k(h)=\inf_{b_2>0}\left\{\sup_{t\geq 0}\left\{ht-b_2\ln E\left(\rho_0^t\left|\omega_0^0 \leq\frac{1}{k}\right.\right)\right\} - b_2\ln P\left(\omega_0^0\leq\frac{1}{k}\right)\right\}+\]
\begin{equation}\label{eq:respres}+\inf_{b_1>0}\left\{\sup_{t\geq 0}\left\{ht-b_1\ln E\left(\rho_0^{-t}\left|\omega_0^0\leq \frac{1}{k}\right.\right)\right\} - b_1\ln P\left(\omega_0^0\leq\frac{1}{k}\right)\right\}\end{equation}
For the second claim, consider the functions 
\[f_k^\pm:[0,\infty)\to\mathbb{R},t\mapsto E\left(\rho_0^{\pm t}\left|\omega_0^0\leq\frac{1}{k}\right.\right)\]
Because of uniform ellipticity both functions are finite everywhere, and additionally they are strictly convex, infinitely differentiable and satisfy $f^\pm(0)=1$ and $f^\pm(t)\to\infty$ for $t\to\infty$. Therefore there exist unique $t_k^\pm>0$ such that
\begin{equation}\label{eq:deft_k}
f_k^+(t_k^+)=E\left(\rho_0^{t_k^+}\left|\omega_0^0\leq\frac{1}{k}\right.\right)=\frac{1}{P(\omega_0^0\leq\frac{1}{k})}>1
\end{equation}
\[f^-_k(t_k^-)=E\left(\rho_0^{-t_k^-}\left|\omega_0^0\leq\frac{1}{k}\right.\right)=\frac{1}{P(\omega_0^0\leq\frac{1}{k})}>1\]
Inserting $t_k^+$ and $t_k^-$ in (\ref{eq:respres}) yields
\begin{equation}\label{eq:respres2}D_k(h)\geq h(t_k^++t_k^-)\end{equation}
We claim that equality holds in (\ref{eq:respres2}). To see this, set
\[g_b(t):=th- b\ln \left[E\left(\rho_0^t\left|\omega_0^0\leq\frac{1}{k}\right.\right)P\left(\omega_0^0\leq\frac{1}{k}\right)\right]\]
The function $g_b$ is concave for every $b>0$, since by Cauchy-Schwarz:
\[E\left(\left.\rho_0^{\frac{t_1+t_2}{2}}\right|\omega_0^0\leq\frac{1}{k}\right)^2\leq E\left(\rho_0^{t_1}\left|\omega_0^0\leq\frac{1}{k}\right.\right)E\left(\rho_0^{t_2}\left|\omega_0^0\leq\frac{1}{k}\right.\right)\]
%Therefore
%\[g_b\left(\frac{t_1+t_2}{2}\right)=\frac{t_1 h+t_2h}{2}-b\ln \left(E\left(\left.\rho_0^{\frac{t_1+t_2}{2}}\right|\omega_0^0\leq\frac{1}{k}\right)P(\omega_0^0\leq\frac{1}{k})\right)\geq\]
%\[\geq\frac{t_1 h+t_2 h}{2}-\frac{b}{2}\ln\left(E\left((\left.\rho_0^{t_1}\right|\omega_0^0\leq\frac{1}{k}\right)P(\omega_0^0\leq\frac{1}{k})\right)\]
%\[-\frac{b}{2}\ln\left(E\left(\left.\rho_0^{t_2}\right|\omega_0^0\leq\frac{1}{k}\right)P(\omega_0^0\leq\frac{1}{k})\right)=\frac{1}{2}g_b(t_1)+\frac{1}{2}g_b(t_2)\]
By dominated convergence (using again that $P$ is uniformly elliptic) we get $\frac{d}{dt} E(\rho^t\omega_0^0\leq\frac{1}{k})=E(\frac{d}{dt}\rho^t|\omega_0^0\leq\frac{1}{k})$ and therefore
\[g_b'(t)=h-\frac{bE(\rho_0^t\ln\rho_0|\omega_0^0\leq\frac{1}{k})}{E(\rho_0^t|\omega_0^0\leq\frac{1}{k})}\]
and setting 
\[\bar{b}=\bar{b}(t_k^+):=h \left(E\left(\rho_0^{t_k^+}\ln\rho_0\left|\omega_0^0\leq\frac{1}{k}\right.\right)P\left(\omega_0^0\leq\frac{1}{k}\right)\right)^{-1}\]
yields that $g_{\bar{b}}'(t_k^+)=0$ and thus by concavity
\[\inf_{b_2>0}\{\sup_{t\geq 0}\{g_b(t)\}\}\leq g_{\bar{b}}(t_k^+)=ht_k^+\]
%In the same way one shows
%\[\inf_{b_1>0}\left\{\sup_{t\geq 0}\left\{t h- b_1\ln \left[E\left(\rho_0^{-t}\left|\omega_0^0\leq\frac{1}{k}\right.\right)P\left(\omega_0^0\leq\frac{1}{k}\right)\right]\right\}\right\}\leq ht_k^-\]
Using the same reasoning for the second line in (\ref{eq:respres}) we conclude
\[D_k(h)=h(t_k^++t_k^-)\]
It remains to show $\lim_{k\to\infty}t_k^+ = t_\infty^+$. First note that $\{t_k^+:k\in\N\}$ is bounded: Because of (\ref{eq:lemma1vor1}) we find $\alpha>0$ such that 
\[P\left(\omega_0^0=0,\omega_0^-\geq\frac{1}{2}+\alpha \right)=\beta>0\]
Therefore
\[\frac{1}{P(\omega_0^0=0)}\geq\frac{1}{P(\omega_0^0\leq\frac{1}{k})}=E\left(\rho_0^{t_k^+}\left|\omega_0^0\leq\frac{1}{k}\right.\right)\geq E\left(\rho_0^{t_k^+}\left|\omega_0^0=0\right.\right)P(\omega_0^0=0)\geq\]
\[\geq \left(\frac{1+2\alpha}{1-2\alpha}\right)^{t_k^+}\beta P(\omega_0^0=0)\]
That is: 
\[t_k^+\leq \left(-2\ln P(\omega_0^0=0)-\ln\beta\right)\left(\ln\frac{1+2\alpha}{1-2\alpha}\right)^{-1}\]
Now choose any subsequence such that $t':=\lim_{i\to\infty} t_{k_i}$ exists. Then
\[\frac{1}{P(\omega_0^0=0)}=\lim_{i\to\infty}\frac{1}{P(\omega_0^0\leq\frac{1}{k_i})}=\lim_{i\to\infty}E\Big(\rho_0^{t_{k_i}^+}\Big|\omega_0^0\leq\frac{1}{k_i}\Big)\]
\begin{equation}\label{eq:flasche}=\lim_{i\to\infty}\frac{E\Big(\rho_0^{t_{k_i}^+}\mathbbm{1}_{\left\{\omega_0^0\leq\frac{1}{k_i}\right\}}\Big)}{P(\omega_0^0\leq\frac{1}{k_i})}=\frac{E\left(\rho_0^{t'}\mathbbm{1}_{\left\{\omega_0^0=0\right\}}\right)}{P(\omega_0^0=0)}=E(\rho_0^{t'}|\omega_0^0=0)\end{equation}
Here the second to last equality is due to dominated convergence, the boundedness of $t_n^+$ and uniform ellipticity. We conclude that $t'$ satisfies (\ref{eq:deft_k}). Since $t_\infty^+$ is the unique positive value satisfying this equation, we see $t'=t_\infty^+$.
\end{proof}
\begin{proof}[Proof of Theorem \ref{thm1}]
We start with the lower bound. Let $\alpha>0$ and choose $b_1,b_2>0$ arbitrary. We write
\[U_n:=\inf\Big\{n\geq 0\Big|X_n \in \{\lfloor -b_1\ln n\rfloor,\lfloor b_2\ln n\rfloor\}\Big\}\]
Then
\[\mathbb{P}^0(\tau>n)\geq P(S^\infty_n(0,b_2,b_1,1))\inf_{\omega\in S^\infty_n}\Big\{P_{\omega}(U_n> n )P_\omega(\tau>n|U_n>n)\Big\}\]
By lemma \ref{lemma1} there is an $n_0$ so that for all $n\geq n_0$ we have:
\[P(S^\infty_n(0,b_2,b_1,1))\geq n^{-C_\infty(b_2,b_1,1)-\alpha}\]
Remember that for $\omega\in S^\infty_n$, zero has minimal potential in $[-b_1\ln n,b_2\ln n]$ and $H=H(-b_1\ln n,b_2\ln n )\geq \ln n$. By corollary \ref{korrpot} we get for some $c_2>0$:
\[P_{{\omega}}^0({U_n}\geq n)= P_{{\omega}}^0\left(\gamma_3\ln(2\ln (n) (b_2+b_1))\frac{{U_n}}{e^H}\geq \gamma_3\ln(2\ln (n) (b_2+b_1))\frac{ n}{e^H} \right)\geq \] 
\begin{equation}\label{eq:applycor1}\geq\frac{1}{2\ln(n)(b_2+b_1)}\exp\left(-\gamma_3\ln(2\ln (n) (b_2+b_1))\frac{ n}{e^H}\right)\geq\exp(-c_2 \ln \ln n)\end{equation}
Moreover, conditional on the random walk never leaving the interval \newline$[-b_1\ln n,b_2\ln n]$, the probability of surviving is $1$, since here the probability for staying at the same location equals zero. In total we get
\[\liminf_{n\to\infty}\frac{\ln \mathbb{P}^0(\tau>n)}{\ln n}\geq -C_\infty(b_1,b_2,1)-\alpha\]
Letting $\alpha$ tend to zero and taking the infimum over all $b_1,b_2$ yields 
\begin{equation}\label{eq:lowerbound}
\limsup_{n\to\infty}-\frac{\ln \mathbb{P}^0(\tau>n)}{\ln n}\leq D_\infty(1)
\end{equation}
Now we address the upper bound: Let $\alpha,\beta,\gamma,\delta>0$ and $k\in\mathbb{N}$. We say $x$ is \textbf{dangerous} if $\omega_x^0>\frac{1}{k}$ and we write $\Theta$ for the set of dangerous locations. An environment is called \textbf{good} if the following conditions apply:
\begin{equation}\label{eq:goodenv1}
\forall x \in[-\ln^{1+\alpha} n,\ln^{1+\alpha} n],b_2,b_1>0\text{ we do \underline{not} have }S_n^k(x,b_2,b_1,1-\delta)
\end{equation}
\begin{equation}\label{eq:goodenv2}
|\Theta \cap [0,\ln^{1+\alpha} n]|, |\Theta\cap[-\ln^{1+\alpha} n,0]|\geq\frac{1}{2} P(\omega_0^0>\frac{1}{k})\ln^{1+\alpha} n
\end{equation}
We estimate the probability of not being in a good environment: In (\ref{eq:goodenv2}), we have the event that a binomial random variable with $\lfloor \ln^{1+\alpha}n\rfloor$ trials and success probability $P\left(\omega_0^0\leq\frac{1}{k}\right)$ has at least $\frac{1}{2}P(\omega_0^0\leq\frac{1}{k})\ln^{1+\alpha} n$ successes. The probability that this does not occur can be bounded by $e^{-c_3\ln^{1+\alpha} n}$ for some $c_3>0$. Now concerning (\ref{eq:goodenv1}): Because of uniform ellipticity a valley of depth $1-\delta$ requires
\[{b_2},{b_1}\geq (1-\delta)\left(\ln\frac{1-\varepsilon_0}{\varepsilon_0}\right)^{-1}\]
So the number of valleys in the interval $[-\ln^{1+\alpha} n,\ln^{1+\alpha} n]$ is at most 
\[\frac{2\ln\frac{1-\varepsilon_0}{\varepsilon_0}\ln^{1+\alpha} n}{2( 1-\delta)\ln n}=:c_4 \ln^\alpha n\]
Since those valleys must remain inside $[-\ln^{1+\alpha} n,\ln^{1+\alpha} n]$ and may only have integer length we can estimate for large enough $n$
\[P(\exists b_2,b_1>0:S_n^k(0,b_2,b_1,1-\delta))\leq \sum_{b_1,b_2=1}^{\lfloor \ln^{1+\alpha} n\rfloor} P\left(S_n^k\left(0,\frac{b_2}{\ln n},\frac{b_1}{\ln n},1-\delta\right)\right)\leq\]
\[\leq \sum_{b_1,b_2=1}^{\lfloor \ln^{1+\alpha} n\rfloor}n^{-C_k\left(\frac{b_2}{\ln n},\frac{b_1}{\ln n},1-\delta\right)+\beta}\leq \ln^{2(1+\alpha)} (n) n^{-D_k(1-\delta)+\beta}\]
So for large $n$ we have
\begin{equation}\label{eq:nichtgut} P(\omega \text{ is not good })\leq 2 e^{-c_3\ln^{1+\alpha}n}+ c_4 e^{(-D_k(1-\delta)+\beta)\ln n + \ln (2\alpha(1-\alpha)\ln^2 n)}\end{equation}
From here on, let $\omega$ be a good environment. The idea is to consider time intervals of length $n^{1-\frac{\delta}{2}}$ and show that in each such interval the probability of hitting a dangerous locations is at least a constant. Since there are $\lfloor n^{\frac{\delta}{2}}\rfloor$ such time intervals we can then conclude that the extinction probability decays fast.
\par Let $a,b\in \Theta$ be such that $[a,b]\subset[-\ln^{1+\alpha}n,\ln^{1+\alpha}n]$ and $(a,b)\cap\Theta=\emptyset$. Then by (\ref{eq:goodenv2}) the interval $[a,b]$ satisfies $H:=H(a+1,b-1)\leq (1-\delta)\ln n$. However we can not directly use lemma \ref{lemmapot1} since it applies only to environments with no holding times. Therefore consider a random walk $\overline{X_n}$ according to the measure $P_{\overline{\omega}}$, which is $P_\omega$ conditioned on the random walk never staying in one place:
\begin{equation}\label{eq:baromega}\overline{\omega}_x:= \left(\frac{\omega_x^-}{\omega_x^++\omega_x^-},0, \frac{\omega_x^+}{\omega_x^++\omega_x^-}\right)\text{ for all }x\in\mathbb{Z}\end{equation}
Obviously this does not change the potential. We write $U_{a,b}$ (resp. $\overline{U}_{a,b}$) for the first time $X_n$ (resp. $\overline{X}_n$) hits $\{a,b\}$ and we choose $\gamma>1$ small enough that $(1-\frac{\gamma}{k})>0$. Using lemma \ref{lemmapot1} yields:
\[\min_{x\in(a,b)}P_{\overline{\omega}}^x\left((\overline{X}_t)\notin (a,b) \text{ for some }t\leq \left(1-\frac{\gamma}{k}\right) n^{1-\frac{\delta}{2}}\right)=\]
%\[=1-\max_{x\in(a,b)}P_{\overline{\omega}}^x(\overline{T}_{a,b}> (1-\gamma\epsilon) n^{1-\frac{\delta}{2}})\]
\[=1-\max_{x\in(a,b)}P_{\overline{\omega}}^x\left(\frac{\overline{U}_{a,b}}{\gamma_1 (c-a)^4 e^H}> \frac{ (1-\frac{\gamma}{k})n^{1-\frac{\delta}{2}}}{\gamma_1 (c-a)^4 e^H}\right)\geq\]
\[
\geq1- \exp\left(-\frac{ (1-\frac{\gamma}{k})n^{1-\frac{\delta}{2}}}{\gamma_1 (c-a)^4 e^H}\right)\geq1-\exp\left(-\frac{ (1-\frac{\gamma}{k})n^\frac{\delta}{2}}{16\gamma_1\ln^{4+4\alpha} n}\right)
\]
We choose $n$ large, so that this probability is at least $\frac{1}{2}$. Let $Y$ be the number of time steps until $U_{a,b}\land n^{1-\frac{\delta}{2}}$ during which $(X_n)$ remains at the same location. Since $\omega_\cdot^0\leq\frac{1}{k}$ for all locations inside the valley, $Y$ is dominated  by a binomial random variable with $\lfloor n^{1-\frac{\delta}{2}}\rfloor$ trials and success probability $\frac{1}{k}$. Let $E_n^1:=\{Y\leq\frac{\gamma}{k} n^{1-\frac{\delta}{2}}\}$ and note that $P(E_n^1)\to 1$. Under both $E_n^1$ and $\left\{\overline{U}_{a,b}\leq (1-\frac{\gamma}{k}) n^{1-\frac{\delta}{2}}\right\}$ we see 
\[U_{a,b}=\overline{U}_{a,b}+Y\leq\left(1-\frac{\gamma}{k}\right)n^{1-\frac{\delta}{2}}+\frac{\gamma}{k} n^{1-\frac{\delta}{2}}=n^{1-\frac{\delta}{2}}\]
Choosing $n$ large enough that $P(E_n^1)\geq\frac{1}{2}$, we conclude
%\[\min_{x\in(a,b)}P_{\overline{\omega}}^x\left((\overline{X}_t)\notin (a,b) \text{ for some }t\leq (1-\frac{\gamma}{k}) n^{1-\frac{\delta}{2}}\right)P(E_n^1)\]
\begin{equation}\label{eq:trapprob}
\min_{x\in(a,b)}P_{{\omega}}^x\left(({X}_t)\notin (a,b) \text{ for some }t\leq n^{1-\frac{\delta}{2}}\right)\geq \frac{1}{4}
\end{equation}
Now we consider the event 
\[E_n^2:=\{(X_t)\notin[-\ln^{1+\alpha}n,\ln^{1+\alpha}n] \text{ for some }t\leq n\}\]
Since $\omega$ is good, the random walk has to pass at least 
\begin{equation}\label{eq:refrefref}
\frac{1}{2} P\left(\omega_0^0>\frac{1}{k}\right)\ln^{1+\alpha}(n)=:c_5\ln^{1+\alpha}(n)
\end{equation}
dangerous locations. However, on $\left(E_n^2\right)^c$, let $Z$ be the number of time intervals of the form 
\[\left[kn^{1-\frac{\delta}{2}},(k+1)n^{1-\frac{\delta}{2}}\right),\quad k=0,...,\lfloor n^{\frac{\delta}{2}}\rfloor\]
where the random walk hits a dangerous location. By (\ref{eq:trapprob}), $Z$ is dominated by a binomial random variable with $\lfloor n^{\frac{\delta}{2}}\rfloor$ trials and success probability $\frac{1}{4}$. We write $E_n^3$ for the event $\left\{Z\leq \frac{1}{8}n^\frac{\delta}{2}\right\}$, and note that for some $c_6>0$, 
\begin{equation}\label{eq:refrefrefref}P_\omega\left(E_n^3\right)\leq \exp(-c_6n^{\frac{\delta}{2}})\end{equation}
Finally we get
\[\mathbb{P}^0(\tau>n)\leq P(\omega\text{ is not good})+\sup_{\omega\text{ is good}}\Big\{ P_\omega(\tau>n)\Big\}\leq\]
\[ P(\omega\text{ is not good})+\sup_{\omega\text{ is good}} \Big\{P_\omega\left(E_n^3\right)+P_\omega(\tau>n|E_n^2)+ P_\omega\left(\tau>n\left|(E_n^2)^c, (E_n^3)^c\right.\right)\Big\}\]
\[\leq e^{-c_5\ln^{1+\alpha}n}+ \exp\left[ (-D_k(1-\delta)+\beta)\ln n + \ln (2\alpha(1-\alpha) \ln n)\right]+\]
\[+\exp\left(-c_6n^{\frac{\delta}{2}}\right)+\exp\left(c_5\ln \left(1-\frac{r}{k}\right) \ln^{1+\alpha} n\right)+\exp\left(\frac{1}{8}\ln \left(1-\frac{r}{k}\right)n^{\frac{\delta}{2}}\right)\]
In the last line, the first three terms are due to (\ref{eq:refrefref}), (\ref{eq:nichtgut}) and (\ref{eq:refrefrefref}). The last two terms come from the fact that at every visit to a dangerous location the process dies with probability at least $1-\frac{r}{k}$, and on $E_n^2$ and $(E_n^3)^c$ we have a lower bound on the number such visits. Letting $\beta$ tend to zero this shows	
\begin{equation}\label{eq:upperbound}
\limsup_{n\to\infty}\frac{\ln \mathbb{P}^0(\tau>n)}{\ln n}\leq -D_k(1-\delta)\xrightarrow{k\to \infty}-D_\infty(1-\delta)\xrightarrow{\delta\to 0}-D_\infty(1)
\end{equation}
The claim follows by (\ref{eq:lowerbound}) and (\ref{eq:upperbound}).
\end{proof}
\section{Proofs - the intermediate case}
\begin{proof}[Proof of lemma \ref{lemma2}]
For $n$ fixed we will call a location $x$ \textbf{safe} if $\omega_x^0\leq\frac{1}{n}$, and we say that a safe location $x$ is \textbf{positive}, \textbf{neutral} or \textbf{negative} if $\ln\rho_x$ is positive, zero or negative, respectively. We start with the upper bound. Recalling definitions (\ref{eq:defevents1})-(\ref{eq:defevents2}) in the previous proof we have
\[P(T_n(x,b_1,b_2,h))\leq P(A_n^+,B_n^+)P(A_n^-,B_n^-)\]
We will do the calculations for the left probability. By the definitions of $\varepsilon_n^+$ and $\delta_n^-$ we know that, on $A_n^+$, a positive location in $[x,x+b_2\ln n]$ satisfies $\ln\rho_x\leq\varepsilon_n^+$ whereas a negative/neutral location satisfies $-\ln\rho_x\geq \delta_n^-$. Let $k$ be the number of positive locations in $[x,x+b_2\ln n]$. Then
\[\sum_{i=x}^{x+\lfloor b_2\ln n\rfloor}\ln\rho_i\leq k\varepsilon_n^+-\delta_n^-(\lfloor b_2\ln n\rfloor-k)\]
The event $B_n^+$ therefore requires $k\geq \lceil d_n\ln n\rceil$, where $d_n:=\frac{h+\delta_n^- b_2}{\varepsilon_n^++\delta_n^-}$. We write $B_{n,k}$ for the event that exactly $k$ locations are positive and $(\lfloor b_2\ln n\rfloor-k)$ are negative or neutral. Since $A_n^+\cap B_n^+ = \bigcup_{k=\lceil d_n\ln n\rceil}^{\lfloor b_2\ln n\rfloor} B_{n,k}$ we get
\[P(A_n^+\cap B_n^+)=\sum_{k=\lceil d_n\ln n\rceil}^{\lfloor b_2\ln n\rfloor} P(B_{n,k})\leq (b_2-d_n)\ln n \max_{k} P(B_{n,k})\]
\[=(b_2-d_n)\ln n \max_k \binom{\lfloor b_2\ln n\rfloor}{k} \left(p_n^+\right)^{k}(p_n^-+p_n^0)^{\lfloor b_2\ln n\rfloor-k} \]
Here and from now on the maximum is taken over $k=\lceil d_n\ln n\rceil,...,\lfloor b_2\ln n\rfloor$. Using $\binom{n}{k}\leq \left(\frac{ne}{k}\right)^k$ we see that for some $c_7>0$:
\begin{equation}\label{eq:expfct}P(A_n^+\cap B_n^+)\leq (c_7)^{\ln n} \max_ke^{k(\ln p_n^+-\ln (p_n^-+p_n^0))+\lfloor b_2\ln n\rfloor\ln (p_n^-+p_n^0)}\end{equation}
We now use the fact that for decreasing sequences $(x_n)_{n\in\N},(y_n)_{n\in\N},(z_n)_{n\in\N}$ in $(0,1)$ we have 
\[\lim_{n\to\infty}\frac{\ln y_n}{\ln x_n}=y,\lim_{n\to\infty}\frac{\ln z_n}{\ln x_n}=z\implies\lim_{n\to\infty}\frac{\ln (y_n+z_n)}{\ln x_n}=\min\{y,z\}\]
Assume first that $\min\{a^+,a^+_0\}< 1$. Then for all $\eta\in(0,1-\min\{a^+,a^+_0\})$ we can choose $n$ large enough that
\[(\min\{a^++a^+_0\}-\eta)\ln p_n^+\geq\ln (p_n^-+p_n^0)\geq (\min\{a^++a^+_0\}+\eta)\ln p_n^+\]
which implies
\[\ln p_n^+-\ln (p_n^-+p_n^0)\leq \ln p_n^+(\underbrace{1-\min\{a^+,a^+_0\}-\eta}_{>0})\]
Therefore the maximum from equation (\ref{eq:expfct}) is attained at $k$ equal to $\lceil d_n\ln n\rceil$, and inserting this yields
\[P(A_n^+\cap B_n^+)\leq (c_7)^{\ln n} e^{\ln n\ln p_n^+(d_n (1-\min\{a^+,a^+_0\}-\eta)+b_2(\min\{a^+,a^+_0\}+\eta))}\]
Since $\ln p_n^+$ tends to $-\infty$ this implies
\[\liminf_{n\to\infty}\frac{\ln P(A_n^+\cap B_n^+)}{\ln n\ln p_n^+}\geq \lim_{n\to\infty}d_n (1-\min\{a^+,a^+_0\}-\eta)+b_2(\min\{a^+,a^+_0\}+\eta))\]
\[=\frac{h+\delta^- b_2}{\varepsilon^++\delta^-}(1-\min\{a^+,a^+_0\}-\eta)+b_2(\min\{a^+,a^+_0\}+\eta)\]
\[\xrightarrow{\eta\to 0}\frac{h+b_2\delta^-+\min\{a^+,a_0^+\} (-h+b_2\varepsilon^+)}{\varepsilon^++\delta^-}=C^+(h,b_2)\]
Now let $\min\{a^+,a^+_0\}\geq 1$, that is we get $C^+(h,b_2)=b_2$. Then for $\eta>0$ we have $1-\min\{a^+,a^-\}-\eta<0$ and (for $n$ large enough) the maximum in equation (\ref{eq:expfct}) is attained at $k$ equal to $\lfloor b_2\ln n\rfloor$. That is,
\[\liminf_{n\to\infty}\frac{\ln P(A_n^+\cap B_n^+)}{\ln n\ln p_n^+}\geq b_2= C^+(b_2,h)\]
Now we would like a corresponding lower bound on the same probability. The same reasoning as in the proof of lemma \ref{lemma1} shows that 
\[\lim_{n\to\infty}\frac{\ln P(A_n^+\cap B_n^+\cap C_n^+)}{\ln n\ln p_n^+}= \lim_{n\to\infty}\frac{\ln P(A_n^+\cap B_n^+)}{\ln n\ln p_n^+}\]
Let $\eta\in (0,\varepsilon^+)$ and choose
\[d:=\left\{\begin{matrix}\frac{h+b_2(\delta^-+\eta)}{\varepsilon^++\delta^-} & \text{ if }  \min\{a^+,a^+_0\}<1 \\[2mm] b_2 & \text{else}\end{matrix}\right.\]
Let $E_n$ be the event that there are exactly $\lfloor d\ln n\rfloor$ positive locations in $[0,b_2\ln n]$ while the remaining $ \lfloor b_2\ln n\rfloor-\lfloor d\ln n\rfloor$ locations are negative or neutral. If $\min\{a^+,a^+_0\}\geq 1$ we have 
\[\lim_{n\to\infty}\frac{\ln P(E_n)}{\ln n \ln p_n^+} = d =C^+(b_2,h)\]
while otherwise
\[P(E_n)\geq e^{\lfloor d\ln n \rfloor(\ln p_n^+-\ln(p_n^-+p_n^0))+\lfloor b_2\ln \rfloor(p_n^-+p_n^0)}\]
and
\[\limsup_{n\to\infty}\frac{\ln P(E_n)}{\ln n\ln p_n^+}	\leq d\Big(1-\min\{a^+,a^+_0\})+b_2\min\{a^+,a^+_0\}\Big)\xrightarrow{\eta\to 0}C^+(b_2,h)\]
Remembering assumptions (\ref{eq:convdistrp}) and (\ref{eq:convdistrmm}) on the convergence in distribution, we see that 
\[P\left(\ln\rho_y\geq\varepsilon^+-\eta\left|\omega_y^0\leq\frac{1}{n},\ln\rho_y>0\right.\right)\to P^+([\epsilon^+-\eta,\infty))=:\gamma_1>0\]
\[P\left(\ln\rho_y\geq-\delta-\eta\left|\omega_y\leq\frac{1}{n}, \ln\rho_y\leq 0\right.\right)\to Q^-([\delta^-+\eta,\infty))=:\gamma_2>0\]
Therefore, conditional on $E_n$, the event
\[\left\{\sum_{i=0}^{\lfloor b_2\ln n\rfloor}\ln\rho_i\geq d (\varepsilon^+-\eta)\ln n-(b_2-d)(\delta+\eta)\ln n\geq h\ln n\right\}\]
has probability at least $\gamma_1^{\lfloor d\ln n\rfloor}\gamma_2^{\lfloor b_2\ln n\rfloor-\lfloor d\ln n\rfloor}=:c_8^{\ln n}$. Now in total we have 
\[\limsup_{n\to\infty}\frac{\ln P(A_n^+\cap B_n^+)}{\ln n\ln p_n^+}\leq \limsup_{n\to\infty}\frac{\ln P(E_n)+\ln P(A_n^+\cap B_n^+|E_n)}{\ln n\ln p_n^+}\leq \]
\[\leq C^+(b_2,h)+\limsup_{n\to\infty}\frac{c_8}{\ln p_n^+}=C^+(b_2,h)\]
The same reasoning can be applied to $P(A_n^-\cap B_n^-)$, and it remains to show equation (\ref{eq:lemma2res1}). We have 
\[\lim_{n\to\infty}\frac{\ln p_n^+}{\min\{\ln p_n^+,\ln p_n^-\}} =  \frac{1}{\max \{1,a^+\} } = \min\{1,a^-\}\]
\[\lim_{n\to\infty}\frac{\ln p_n^-}{\min\{\ln p_n^+,\ln p_n^-\}} =  \frac{1}{\max \{1,a^-\} } = \min\{1,a^+\}\]
and therefore
\[\lim_{n\to\infty} \frac{\ln [P(A_n^+\cap B_n^+)P(A_n^-\cap B_n^-)]}{\ln n\min\{\ln p_n^+,\ln p_n^-\}}=\]
\[=\lim_{n\to\infty} \frac{\ln P(A_n^+\cap B_n^+)}{\ln n\ln p_n^+}\frac{\ln p_n^+}{\min\{\ln p_n^+,p_n^-\}}+\frac{\ln P(A_n^-\cap B_n^-)}{\ln p_n^-}\frac{\ln p_n^-}{\min\{\ln p_n^+,\ln p_n^-\}}=\]
\[=C^+(b_2,h) \min\{1,a^-\}+C^-(b_1,h)\min\{1,a^+\}=C(b_1,b_2,h)\]
\end{proof}
\begin{proof}[Proof of Theorem \ref{thm2}]
We adapt the steps of the previous Theorem. First for the lower bound we choose $b_1,b_2>0$ arbitrarily and estimate
\[\mathbb{P}^0(\tau>n)\geq P(T_n(0,b_2,b_1,1))\inf_{\omega\in T_n(0,b_1,b_2,1)}\Big\{P_{\omega}(U_n\geq n )P_\omega(\tau>n|U_n>n)\Big\}\]
where $U_n$ is the first time the random walk $X_n$ hits $\{\lfloor-b_1\ln n\rfloor,\lfloor b_2\ln n\rfloor\}$. We can not directly apply lemma \ref{lemmapot1} because $\omega_\cdot^0$ may be non-zero in the interval. Therefore consider the random walk $\overline{X_n}$ conditioned on never staying in one place. That is, we replace $\omega$ by the environment $\overline\omega$ from (\ref{eq:baromega}) and consider a random walk $(\overline{X_n})_{n\in\N}$ with law $P_{\overline{\omega}}$. Obviously $\overline{U_n}\leq U_n$ and so we get (repeating the computation from (\ref{eq:applycor1})) for some $c_9>0$
\[P_{{\omega}}^0({U_n}\geq n)\geq P_{\overline{\omega}}^0(\overline{U_n}\geq n)\geq\exp(-c_9 \ln \ln n)\]
Moreover, on $\{U_n\geq n\}$ the random walk visits only locations with $\omega_\cdot^0\leq\frac{1}{n}$ and therefore for $n$ large enough
\[P_\omega(\tau>n|U_n\geq n)\geq \left(1-\frac{1}{n}\right)^{n}\geq \frac{1}{2e}\]
Since $P(T_n(0,b_1,b_2,1))$ decays faster than $e^{-c_9\ln \ln n}$, this shows
\[\limsup_{n\to\infty}\frac{\ln \mathbb{P}(\tau\geq n)}{\ln n\ln\min\{p_n^+,p_n^-\}}\leq \limsup_{n\to\infty}\frac{\ln \mathbb{P}(\ln P(T_n(x,b_1,b_2,1))}{\ln n\ln\min\{p_n^+,p_n^-\}} =\]
\[=\min\{1,a^-\}C^+(b_2,1)+\min\{1,a^+\}C^-(b_1,1)\]
When taking the infimum over all $b_1,b_2>0$ we see that the exponent is increasing in $b_i$ and that the infimum is attained at \[\overline{b_1}=(\varepsilon^+)^{-1},\quad \overline{b_2} = (\varepsilon^-)^{-1}\]Inserting those values in equation (\ref{eq:lemma2res1}) yields
\[\limsup_{n\to\infty}\frac{\ln \mathbb{P}(\tau\geq n)}{\ln n\ln\min\{p_n^+,p_n^-\}}\leq\inf\{C(b_1,b_2,1):b_i\geq \overline{b_i}\text{ for }i=1,2\}\]
\[=\frac{\min\{1,a^+\}}{\varepsilon^-}+\frac{\min\{1,a^-\}}{\varepsilon^+}=D\]
Now the upper bound: Let $\alpha,\gamma\in (0,1)$ and set $\beta:=\frac{1-\alpha}{\alpha}\frac{1-\gamma}{1+\gamma}$. We say $x$ is \textbf{dangerous} if $\omega_x^0\geq n^{-\beta}$. Moreover, we say $\omega$ is \textbf{good} if
\begin{subequations}\begin{align}
\forall x,y\in[-n,n]: \left\{\begin{matrix}x,y\text{ are dangerous, no location in between is }\\\implies |x-y|\leq \ln^{\kappa+2}n\end{matrix}\right\}\label{eq:newgoodenv1}\\
T_{n^{\alpha}}(x,b_2,b_1,\beta)\text{ does not occur for any }x\in [-n,n],b_2,b_1>0\label{eq:newgoodenv2}
\end{align}\end{subequations}
We need to bound the probability of not being in a good environment: Assume (\ref{eq:newgoodenv1}) is not satisfied. That is, there is an interval of length  at least $\lceil\ln^{2+\kappa}n\rceil$ which does not contain any dangerous location. There are at most $2 n$ such intervals, therefore
\begin{equation}\label{eq:notgood1}P(\text{(\ref{eq:newgoodenv1}) fails for }\omega)\leq 2nP\left(\omega_0^0\leq n^{-\alpha}\right)^{\ln^{2+\kappa}n}\leq 2e^{\ln n+\ln p\ln^{2+\kappa}n}\end{equation}
Here $p=P\left(\omega_0^0\leq n^{-\alpha}\right)$, with $p<1$ for n large enough. Moreover 
\[\lim_{n\to\infty}\frac{\ln P(T_{n^\alpha} (0,b_1,b_2,\beta))}{-\ln^{1+\kappa} n}= \lim_{n\to\infty}\frac{\ln P(T_{n} (0,b_1,b_2,\beta))}{-\left(\frac{1}{\alpha}\right)^{1+\kappa}\ln^{1+\kappa} n} = \alpha^{\kappa+1} C(b_1,b_2,\beta)\]
and so for $n$ large enough we get (using the definition of $\beta$)
\[P(\omega\text{ not good})\leq e^{-\alpha^\kappa(1-\alpha) \frac{1-2\gamma}{1+\gamma} D \ln^{1+\kappa} n}\]
From now on we again consider a good environment $\omega$. Let $a$ and $b$ be dangerous locations in $[-n,n]$ such that no location in $(a,b)$ is dangerous. Then by (\ref{eq:newgoodenv2}) and (\ref{eq:newgoodenv1}) we have
\[H:=H(a+1,b-1)\leq \beta\ln n^\alpha=\alpha\beta\ln n\]
\[\text{and } \ \ \ b-a\leq \beta^{2+\kappa}\ln^{2+\kappa} n\]
Let $\overline{U}=\overline U_{a,b}$ be the first time a random walk $\overline {X_n}$ in the environment $\overline\omega$ from (\ref{eq:baromega}) hits $\{a,b\}$. We can conclude as in (\ref{eq:trapprob}) that for $n$ large enough
\[P_{\overline{\omega}}^x\left(\overline{U}_{a,b}\leq \frac{n^{\alpha\beta(1+\gamma)}}{2}\right)=1-P_{\overline{\omega}}^x\left(\frac{\overline{U}_{a,b}}{\gamma_1 (b-a)^4e^{H}}\geq \frac{n^{\alpha\beta(1+\gamma)}}{2\gamma_1 (b-a)^4e^{H}}\right)\geq\]
\begin{equation}\label{eq:lastlast}  1-\exp\left(-\frac{n^{\alpha\beta(1+\gamma)}}{2\gamma_1 (b-a)^4 e^{H}}\right)\geq 1-\exp\left(-\frac{1}{2\gamma_1\beta^{2+\kappa}} \frac{n^{\alpha\beta\gamma}}{\ln^{4(2+\kappa)} n}\right)\geq \frac{1}{2}\end{equation}
In the same way as in the previous proof we obtain a bound on $U_{a,b}$ from the one on $\overline{U}_{a,b}$, so that
\[P_\omega^x\left({U}_{a,b}\leq {n^{\alpha\beta(1+\gamma)}}\right)\geq \frac{1}{4}\]
Consider $\lfloor n^{1-\alpha\beta(1+\gamma)}\rfloor$ time intervals of length $n^{\alpha\beta(1+\gamma)}$, and let $Z$ be the number of time intervals during which the random walk hits a dangerous location. Then 
\[P\left(Z\leq\frac{1}{8}n^{1-\alpha\beta(1+\gamma)} \right)\leq e^{-c_{10}\;n^{1-\alpha\beta(1+\gamma)}}\]
Let $E_n:=\big\{Z>\frac{1}{8}n^{1-\alpha\beta(1+\gamma)}\big\}$ be the complementary event. On $E_n$, the random walk 
hits at least $\lfloor\frac{1}{8}n^{1-\alpha\beta(1+\gamma)}\rfloor$ dangerous locations and the process survives with probability at most
\[\left(1-\frac{r}{n^\alpha}\right)^{\frac{1}{8}n^{1-\alpha\beta(1+\gamma)}}\sim e^{-\frac{r}{8} n^{1-\alpha\beta(1+\gamma)-\alpha}}\]
Note $1-\alpha\beta(1+\gamma)-\alpha=\gamma (1-\alpha)>0$, and therefore we get
\[\mathbb{P}(\tau>n)\leq P(\omega\text{ not good})+\sup_{\omega\text{ good}}\Big\{P_\omega(\tau>n)\Big\}\]
\[\leq P(\omega\text{ not good})+\sup_{\omega\text{ good}} \Big\{P_\omega(E_n^c)+P_\omega\left(\tau>n\left|E_n\right.\right)\Big\}\]
\begin{equation}\label{eq:last}\leq e^{-\alpha^\kappa(1-\alpha) \frac{1-2\gamma}{1+\gamma} D \ln^{1+\kappa} n}+e^{-c_{10}\; n^{1-\alpha\beta(1+\gamma)}}+e^{-\frac{r}{8} n^{\gamma(1-\alpha)}}\end{equation}
This proves
\[\liminf_{n\to\infty} \frac{\ln\mathbb{P}(\tau>n)}{-c\ln^{1+\kappa} n} \geq \alpha^{\kappa}(1-\alpha)\frac{1-\gamma}{1+\gamma} D\]
\[\xrightarrow{\gamma\to 0} \alpha^{\kappa}(1-\alpha)D\]
Now some easy calculations show that the maximum over all $\alpha$ is attained at $\alpha=\frac{\kappa}{1+\kappa}$, resulting in the lower bound in equation (\ref{eq:thm2res1}).
\end{proof}
\section{Proofs - the stretched exponential case}
\begin{proof}[Proof of Theorem \ref{thm3}]
For the lower bound, let $\alpha,\beta\in (0,1)$ and $\gamma>0$. Then for $n$ large enough we have by lemma \ref{lemma2}
\[P(T_{n^\alpha}(0,\overline{b_1},\overline{b_2},\beta))\geq e^{-(D+\gamma)\alpha\beta n^{\alpha\kappa} \ln n}\]
As in the previous proofs, we introduce a random walk $\overline{X_n}$ in an environment $\overline{\omega}$ without holding times, and denote by $\overline{U_n}$ the first time this random walk leaves the interval $[-\overline{b_2}\ln n,\overline{b_1}\ln n]$. Now we can use corollary \ref{korrpot}, so that
\[P_{\overline{\omega}}\left(\;\overline{U_n}>n^{\alpha\beta}\right)\geq\frac{1}{2(\overline{b_1}+\overline{b_2})\ln n}\exp\left(-\frac{\gamma_3\ln(2(\overline{b_1}+\overline{b_2})\ln n))n^{\alpha\beta}}{n^{\alpha\beta}}\right)=:e^{-c_{11}\ln \ln n}\]
As before, we have $P_\omega(U_n>n^{\alpha\beta})\geq P_{\overline\omega}(\;\overline {U_n}>n^{\alpha\beta})$, and conditional on the event $T_n^\alpha(0,\overline{b_1},\overline{b_2},\beta)\cap\{U_n>n^{\alpha\beta}\}$, the walk visits only locations $x$ with $\omega_x^0\leq n^{-\alpha}$ until time $n^{\alpha\beta}$. In this case the probability for survival until $n^{\alpha\beta}$ is at least
\[\left(1 -\frac{r}{n^\alpha}\right)^{n^{\alpha\beta}}\geq e^{-\frac{r}{2} n^{\alpha\beta-\alpha}}\geq \frac{1}{2}\]
Now conditional on $\tau>n^{\alpha\beta}$, the random walk survives the remaining $n^{1-\alpha\beta}$ steps with probability at least $(c_{12})^{n^{1-\alpha\beta}}$, where $c_{12}:=(1-r)(1-2\varepsilon_0)$ is due to (\ref{eq:defue}). In total we have shown
\[\mathbb{P}(\tau>n)\geq P(T_n^\alpha(0,\overline{b_1},\overline{b_2},\beta))\]
\[\times\inf_{\omega\in T_n^\alpha(0,\overline{b_1},\overline{b_2},\beta)}\Big\{ P_\omega(U_n>n^{\alpha\beta})P_\omega(\tau>n^{\alpha\beta}|U_n>n^{\alpha\beta})P_\omega(\tau>n|\tau>n^{\alpha\beta})\Big\}\]
\[\geq \frac{1}{4} e^{-(D+\gamma)\alpha\beta n^{\alpha\kappa} \ln n +\ln c_{12} n^{1-\alpha\beta}}e^{c_{11}\ln \ln n}\]
We want to minimize $\max\{\alpha\kappa,1-\alpha\beta\}$ subject to $\alpha,\beta\in (0,1)$. Setting $\beta:=\frac{1-\alpha\kappa}{\alpha}$ this maximum is equal to $\alpha\kappa$, where $\alpha>\frac{1}{1+\kappa}$. This proves
\[\limsup_{n\to\infty} \frac{\ln (-\ln \mathbb{P}(\tau>n))}{\ln n} \leq \alpha\kappa\xrightarrow{\alpha\downarrow\frac{1}{1+\kappa}}\frac{\kappa}{1+\kappa}\]
Now concerning the upper bound: Let $\alpha,\beta\in(0,1)$, $\gamma\in(0,\frac{1-\beta}{4})$ and $\delta>0$. We say $x\in\mathbb{Z}$ is \textbf{safe} if $\omega_x^0\leq n^{-\alpha}$. We call $\omega$ \textbf{good} if
\begin{subequations}\begin{align}T_{n^{\alpha}}(x,b_2,b_1,\beta)\text{ does not occur for any }x\in [-n,n],b_2,b_1>0\label{eq:newnewgoodenv2}\\
\forall x,y\in[-n,n]: \left\{\begin{matrix}x,y\text{ are dangerous, no location in between is }\\\implies |x-y|\leq n^\gamma\end{matrix}\right\}\label{eq:newnewgoodenv1}\end{align}\end{subequations}
By the same reasoning as in the previous proofs,
\[P(\text{(\ref{eq:newnewgoodenv2}) does not occur})\leq e^{c_{13}\beta \ln n^\alpha n^{\alpha\kappa}}\]
so that
\[P(\omega\text{ is not good})\leq e^{-c_{14}\; n^\gamma} + e^{-c_{13}\beta\alpha\ln(n) n^{\alpha\kappa}}\]
Choosing as before an interval $[a,b]\subset\Z$ with 
\[[a,b]\cap\{\text{safe locations}\}=\{a,b\}\]
and modifying equation (\ref{eq:lastlast}) yields
\[P_{\overline{\omega}}^x\left(\overline{U}_{a,b}\leq \frac{n^{\alpha\beta+4\gamma+\delta}}{2}\right)\geq 1-\exp\left(-\frac{n^{\alpha\beta+4\gamma+\delta}}{2\gamma_1 n^{4\gamma} n^{\alpha\beta}}\right)\geq \frac{1}{2}\]
Again we easily convert this into a bound for the random walk with holding times
\[P_{{\omega}}^x\left({U}_{a,b}\leq {n^{\alpha\beta+4\gamma+\delta}}\right)\geq\frac{1}{4}\]
Let $Z$ be the number of time intervals of length $n^{\alpha\beta+4\gamma+\delta}$ during which the random walk hits a dangerous location, and $E_n:=\big\{Z> \frac{1}{8}n^{1-4\gamma-\alpha\beta-\delta}\big\}$. Then
\[P(E_n^c)\leq e^{-c_{15} n^{1-4\gamma-\alpha\beta-\delta}}\]
%\[E:=\left\{\text{The random walk hits less than }\frac{1}{8}n^{1-4\gamma-\alpha\beta-\delta}\text{ dangerous locations}\right\}\]
On the other hand, on $E_n$ the probability of surviving until $n$ is at most
\[\left(1-\frac{r}{n^\alpha}\right)^{\frac{1}{8}n^{1-4\gamma-\alpha\beta-\delta}}\leq e^{-\frac{r}{16} n^{1-4\gamma-\alpha\beta-\delta-\alpha}}\]
For this to make sense we assume that $\beta$ and $\delta$ are small enough such that the exponent in the last equation is positive. In total we get
\[\mathbb{P}(\tau>n)\leq e^{-c_{14} n^\gamma} + e^{-c_{13}\alpha\beta \ln(n) n^{\alpha\kappa}} + e^{-c_{15} n^{1-4\gamma-\alpha\beta-\delta}} + e^{-\frac{r}{16} n^{1-4\gamma-\alpha\beta-\delta-\alpha}}\]
and now have to find $\alpha,\beta,\gamma,\delta$ such that \[\min\{\gamma,\alpha\kappa,1-4\gamma-\alpha\beta-\delta,1-4\gamma-\alpha\beta-\delta-\alpha\}\] becomes maximal. Disregarding the third term, setting $\alpha\kappa=\gamma$ and $\alpha\kappa = 1-4\gamma-\alpha\beta-\delta-\alpha$ we get
\[\alpha =\frac{1-\delta}{1+5\kappa+\beta}\]
and therefore
\[\liminf_{n\to\infty}\frac{\ln(-\ln\mathbb{P}(\tau>n))}{\ln n}\geq \frac{\kappa(1-\delta)}{1+5\kappa+\beta}\xrightarrow{\beta,\delta\downarrow 0} \frac{\kappa}{1+5\kappa}\]
\end{proof}
\section*{Acknowledgements}
We would like to thank Nina Gantert for several inspiring conversations. We also want to thank an anonymous referee for pointing out several mistakes in a preliminary version. 
\bibliographystyle{alea3}
\bibliography{template}
\end{document}